\providecommand{\U}[1]{\protect\rule{.1in}{.1in}}
\newtheorem{theorem}{Theorem}[section]
\newtheorem{corollary}[theorem]{Corollary}
\newtheorem{definition}[theorem]{Definition}
\newtheorem{lemma}[theorem]{Lemma}
\newtheorem{remark}[theorem]{Remark}
\numberwithin{equation}{section}
\numberwithin{equation}{section}
\def\hb{\hbox}
\def\o {\Omega}
\def\ouno {\Omega_{1}}
\def\odue {\Omega_{2}}
\def\uuno {u_{1}}
\def\udue {u_{2}}
\def\nuno {n_{1}}
\def\ndue {n_{2}}
\def\e {\varepsilon}
\def\ve {V}
\def\hdue {H^{1}( \odue)}
\def\huno {H^{1}( \ouno)}
\def\ho#1 {H^{1}_0(#1)}
\def\luno {L^{2}(\ouno)}
\def\ldue {L^{2}(\odue)}
\def\lth2 {L^{2}(0,T;\;\hdue)}
\def\norq#1#2 {\| #1 \|_{#2}^{2}}
\def\nor#1#2 {\| #1 \|_{#2}}
\def\p#1#2 {P^{\e}_{#1}#2}
\def\g {\nabla}
\def\R{\mathbb{R}}
\begin{document}
\title{Exact Internal Controllability 
for a Problem with\\ Imperfect Interface}
\author{S. Monsurr\`{o}\thanks{Universit\`{a} di Salerno, Department of Mathematics, 84084 Fisciano (SA), Italy email: smonsurro@unisa.it},
A. K. Nandakumaran\thanks{Department of Mathematics, Indian Institute of Science, Bangalore 560012,  India; email: nands@iisc.ac.in},
C. Perugia\thanks{University of Sannio, Department of Science and Technology, Via de Sanctis, 82100 Benevento,
Italy; email: cperugia@unisannio.it}, }
\date{ }
\maketitle

\begin{abstract}
In this paper we study the internal exact controllability for a second order linear evolution equation defined in a two-component domain. On the interface we prescribe a jump of the solution proportional to the conormal derivatives, meanwhile a homogeneous Dirichlet condition is imposed on the exterior boundary. Due to the geometry of the domain, we apply controls through two regions  which are neighborhoods of a part of the external boundary and of the whole interface, respectively.
Our approach to internal exact controllability consists in proving an observability inequality by using the Lagrange multipliers method. Eventually we apply the Hilbert Uniqueness Method, introduced by J. -L. Lions, which leads to the construction of the exact control through the solution of an adjoint problem. Finally we find a lower bound for the control time depending not only on the geometry of our domain and on the matrix of coefficients of our problem but also on the coefficient of proportionality of the jump with respect to the conormal derivatives.
 \\
\\
\end{abstract}

{\bf Keywords:}
 Exact controllability; Second order hyperbolic equations; Imperfect interface condition; HUM

{\bf MSC:} 35B27,  35Q93, 93B05, 93B05, 35B37, 35L20

\section{Introduction}\label{sec1}

In this paper, we plan to study the internal exact controllability of an imperfect transmission hyperbolic problem. More specifically, we consider a bounded domain $\Omega$ in  $\R^n$, $n\geq 2$, consisting of two sets   $\Omega_1$ and  $\Omega_2$, where $\Omega_2$ is compactly contained in $\Omega$ and  $\Omega_1= \Omega \setminus \overline{\Omega}_2$. Thus our domain has an external boundary $\partial\Omega$ and an  interface boundary $\Gamma$ (see Fig. 1).

\definecolor{c1}{RGB}{255,90,100}   
	\definecolor{c2}{RGB}{255,150,150}  
	\definecolor{c3}{RGB}{255,200,0}    
	\definecolor{c4}{RGB}{255,255,100}	

		\begin{center}
			\begin{tikzpicture}
				\filldraw[c2] plot [smooth cycle] coordinates {(2,0)(1,1)(0,2)(-2,1.6)(-2,0)(0,-2)(1,-1.6)};
				\filldraw[c4] plot [smooth cycle] coordinates {(1,0)(0.5,0.5)(0,1)(-1,0.8)(-1,0)(0,-1)(0.5,-0.8)};
				\draw(0,0)node{$\Omega_2$};
				\draw(-1,1.4)node{$\Omega_1$};
				\draw(1,1.5)node{$\Omega$};
				\draw(1,-2.5)node{Fig. 1: $\Omega_2\subset\subset\Omega$ and $\Omega_1=\Omega\backslash\overline\Omega_2$};
				\draw(0.9,-0.8)node{$\Gamma$};
				\draw(2,-0.8)node[right]{Interface Boundary};
				\draw[->](2,-0.8)--(1,-0.8);
				\draw(-1.6,-1)node{$\partial\Omega$};
			\end{tikzpicture}
		\end{center}

The hyperbolic  problem is defined in the set $\Omega$,  with appropriate interface and boundary conditions on $\Gamma$ and on $\partial\Omega$.
Namely, on the interface separating the two components, we prescribe a jump of the solution proportional to the conormal derivatives, meanwhile a homogeneous Dirichlet condition is imposed on the exterior boundary.  From the physical point of view, this problem describes the wave propagation in a composite made up of two materials with very different coefficients of propagation.
 The jump on $\Gamma$ is the mathematical interpretation of imperfect interface characterized by the discontinuity of the displacement (see \cite{1,3}, \cite{8}$\div$\cite{10}, \cite{FM, FMPhomo}, \cite{13,15,16}, \cite{17}$\div$\cite{MP1}, \cite{Zha1, Zha2} and references therein).
 
 The issue of exact controllability consists in acting on the trajectories of an evolution system by means of a control (the right hand side of the system, the boundary conditions, etc.) and asking if,
given a time interval $[0,T]$,   is it possible to find a control (or set of controls) leading the system to a desired state at time $T$, for all initial data. In a suitable functional setting, the problem of exact controllability reduces to that of observability. Roughly speaking, observability consists in deriving an estimate for the energy of an uncontrolled system, at time $t=0$,  in terms of partial measurements of its solution done on the control region. This estimate easily implies an upper bound for the norm of the initial data of the uncontrolled problem. 

The observability inequality, far from being obvious, forces the control set to satisfy suitable geometric conditions. Indeed, in \cite{BLR}, through the microlocal  approach, the authors proved  that when considering a regular domain, the observability inequality holds if and only if every ray of geometric optics, propagating into the domain and reflecting on its boundary, enters the control region in time less than the control time $T$.

In this paper, we do not require any regularity on $\partial \Omega$ and we make use of Lagrange multipliers method to prove the above mentioned observability inequality.
In general, when applying this technique, the assumptions on the geometry of the control region are very restrictive. They require the control set to be a neighborhood of parts of the boundary having specific structures.
In our case, due to the geometry of the domain, we need to introduce a further control set  which is a neighborhood of the interface $\Gamma$. More precisely, we apply controls through two regions $\omega_1 \subset \Omega_1$ and $\omega_2 \subset \Omega_2$ which are neighborhoods of a part of $\partial \Omega$  and of the whole interface $\Gamma$, respectively (see Fig. 2 and  Fig 3 for control region $\omega_1$). 
Other novelties in our framework are the jump of the solution on the interface $\Gamma$ and the resulting presence of a non-constant coefficients matrix. Due to the imperfect interface, when proving observability inequality, some difficulties arise in estimating  specific surface integrals.
Moreover, as usual in the hyperbolic framework, due to the finite speed of propagation of waves, the control time $T$ in our observability inequality has to be large enough. Indeed the control acting on $\omega_1$ and $\omega_2 $  cannot transfer the information immediately to the whole domain $\Omega$. However, unlike classical cases, we find a lower bound for the control time $T$ depending not only on the geometry of our domain and on the matrix of coefficients of our problem but also on the coefficient of proportionality of the jump with respect to the conormal derivatives.
The exact details are given in Section \ref{sec3}.

\bigskip 
	\begin{center}
			\begin{tikzpicture}[scale=0.1]
				\filldraw[c1] plot [smooth cycle] coordinates {(20,0)(10,10)(0,20)(-20,16)(-20,0)(0,-20)(10,-16)};
				\filldraw[c2,dashed] plot [smooth cycle] coordinates {(18,0)(9,9)(0,18)(-18,14.4)(-18,0)(0,-18)(9,-14.4)};
				\draw[dashed] plot [smooth cycle] coordinates {(18,0)(9,9)(0,18)(-18,14.4)(-18,0)(0,-18)(9,-14.4)};
				\filldraw[c3] plot [smooth cycle] coordinates {(10,0)(5,5)(0,10)(-10,8)(-10,0)(0,-10)(5,-8)};
				\filldraw[c4,dashed] plot [smooth cycle] coordinates {(8,0)(4,4)(0,8)(-8,6.4)(-8,0)(0,-8)(4,-6.4)};
				\draw[dashed] plot [smooth cycle] coordinates {(8,0)(4,4)(0,8)(-8,6.4)(-8,0)(0,-8)(4,-6.4)};
				\draw(10,15)node{$\Omega$};
				\draw(0,0)node{$\Omega_2$};
				\draw(-10,14)node{$\Omega_1$};
				\draw(-20,0)node[left]{control region $\omega_1$};
				\draw(-20,-15)node[left]{control region $\omega_2$};
				\draw[->,thick](-30,2)--(-21,10);
				\draw[->,thick](-30,-13)--(-10,5);
				\draw(0,-25)node{Fig. 2: $\omega_1$ -neighbourhood of $\partial\Omega$};
				\draw(0,-30)node{ \quad\quad\quad \quad \qquad$\omega_2$ -neighbourhood of $\Gamma=\partial\Omega_2$};
				\draw(25,-10)node{$\omega_i\subset\Omega_i$};
			\end{tikzpicture}
		\end{center}
		Once obtained the observability inequality, in order to find the exact control, we use a constructive method,  introduced by Lions in  \cite{ki27, ki27*}, known as Hilbert Uniqueness Method (HUM for short). The main feature is to build a control through the solution of a hyperbolic problem associated to suitable initial conditions. These initial conditions are obtained by calculating at zero time the solution of a backward problem by means of a functional which turns out to be an isomorphism, thanks to the observability estimate (see Section \ref{secHUM}).
Let us recall that the control obtained by HUM is also an energy minimizing control.\\

The paper is organized as follows. In Section 2, we introduce the setting of the evolution problem,  recall the definitions and some properties of the  appropriate functional spaces required for the solutions  of interface problems under consideration. For more details, we refer the reader to \cite{10} and \cite{17} where the elliptic case is considered. Since the initial data of our problem are in a weak space, the related solution cannot be defined using the standard weak formulation. Thus, as usual when dealing with controllability problems, we need to apply the so called transposition method (see \cite{ki28}, Chapter 3, Section 9). We also give the definition of exact controllability.

Section 3 is the core of the paper which is devoted to the proof of the observability inequality (see Lemma \ref{invineq5**}). To this aim, we adapt to our context some arguments introduced in \cite{ki27*} and \cite{ki27**}. By means of the Lagrange multipliers method, we derive an important identity (see Lemma \ref{lemfirstid}). Then, we specify the required geometrical and topological assumptions on the position of the observer and on the control sets $\omega_1$ and $\omega_2$ (see  Definitions \ref{omega1} and \ref{omega2}) and apply the above mentioned identity in order to establish some crucial inequalities, given in Lemmas \ref{invineq1}, \ref{invineq2} and \ref{invineq3}. 
Finally, in Lemma \ref{invineq4}, we find the lower bound for the control time $T$. Taking into account the way $\omega_1$ is constructed, it is possible to get different regions of controllability depending on the observer point $x^0$. The significance of the point $x^0$ will be clear in Section 3.
In Section \ref{secHUM}, via HUM, we prove the exact controllability result by constructing the suitable isomorphism that allows us to identify the exact control.

The pioneer studies on exact controllability for the wave equation with transmission conditions, via HUM, go back to \cite{ki27*}, Chapter $6$. Here J. L. Lions  considers a Dirichlet problem with matrix constant on each component of the domain and a control on part of the external boundary. Later on, in \cite{LW1} the authors deal with the case of a Neumann boundary value problem in the same framework. In \cite{ANP1}-\cite{ANP4}, \cite{DGL}-\cite{DNP}, \cite{Durante1}-\cite{D3}, \cite{NS}-\cite{NRP} optimal control and exact controllability problems in domains with highly oscillating boundary are studied. Moreover we refer to \cite{ki27} and \cite{ CD1, ki15,ki17} for the exact controllability of hyperbolic problems with oscillating coefficients in fixed and in perforated domains respectively, to \cite{FP2,FP3} and  \cite{FMPhomo,FMP, MP1} for the optimal control and the exact controllability, respectively, of hyperbolic problems in composites with imperfect interface. Moreover in \cite{FP4} it is faced the optimal control of rigidity parameters of thin inclusions in composite materials.
In \cite{DJ} and \cite{DJ1, DJ2} the authors study, respectively, the correctors and the approximate control for a class of parabolic equations with interfacial contact resistance, while in \cite{DoNa} the authors study the approximate controllability of linear parabolic equations in perforated domains. In \cite{Z2002}, see also \cite{Z94}, the author studies the approximate controllability of a parabolic problem with highly oscillating coefficients in a fixed domain.
The null controllability of semilinear heat equations in a fixed domain was done in \cite{FC}. The exact controllability and exact boundary controllability for semilinear wave equations can be found in \cite{LZ} and \cite{Z91}, respectively.
Finally, for what concerns transmission problems in the nonlinear case we quote \cite{DR1} and \cite{DR2} (see also \cite{DMR1} and \cite{DMR2}).
 	
\section{Statement of the problem}\label{sec2}
Let  $\o$ be a connected open bounded subset of $\R^n$, $n\geq 2$. We denote by $\o_1$ and $\o_2$, two non-empty open connected and disjoint subsets of $\o$
such that $\overline{\Omega}_2 \subset \o$ and $\ouno  = \o \setminus\odue$.
Let us assume that the interface $\Gamma=\partial {\o_{2}}$ separating the two components of $\o$  is  Lipschitz continuous and  observe that by construction one has
\begin{equation}\label{eq2.1}
\partial\Omega\cap \Gamma =\emptyset.
\end{equation}
Given $T>0$, we set $Q_1=\ouno\times (0,T)$, $Q_2=\odue\times (0,T)$, $\Sigma=\partial \o \times (0,T)$ and $\Gamma_T=\Gamma \times (0,T)$. 
\\
This paper aims to study the internal exact controllability of a hyperbolic imperfect transmission problem defined in the above mentioned  domain. 
More precisely, given two open subsets $\omega_i$ of $\Omega_i$, $i=1,2$, and a control $\zeta:=(\zeta_1, \zeta_2)$, we consider the problem
\begin{equation}\label{eq2.6}
\left\{
\begin{array}{@{}ll}
\uuno'' - \hbox {div} \left(A(x)
\nabla \uuno \right)=
    \zeta_1 \chi_{\omega_1} &\hbox {in } Q_1, \\[3pt]
\udue''- \hbox{div} \left(A(x) \nabla \udue \right) = \zeta_2 \chi_{\omega_2} &\hbox {in } Q_2, \\[3pt]
A(x) \nabla \uuno   \nuno =
   -A(x) \nabla \udue   \ndue& \hbox {on }
\Gamma_T, \\[3pt]
A(x)  \nabla \uuno   \nuno
   = -h(x) (\uuno-\udue)& \hbox {on }
\Gamma_T, \\[3pt]
\uuno = 0&\hbox {on } \Sigma,\\[3pt]
\uuno(0)= U_{1}^0,\quad \uuno'(0)= U^1_{1}&\hbox {in }  \ouno, \\[3pt]
 \udue(0)
= U_{2}^0, \quad \udue'(0)
= U^1_{2}&\hbox {in }  \odue,
\end{array}
\right.
\end{equation}
where for any fixed $i=1,2$, $n_{i}$ is the unitary outward normal to $ 
\Omega_{i}$ and $\chi_{\omega_i}$ denotes the characteristic function of the set $\omega_i$ which is the action of the control $\zeta_i$. For simplicity, we denote the wave operator by $L = \partial_{tt} -  \hbox {div} \left(A(x)
\nabla \>\right).$

Let us recall the definitions of the required  function spaces to study the interface problem under consideration. They were introduced for the first time in \cite{17} and successively in \cite{10} in the framework of the study of the homogenization of the analogous stationary problem. Indeed, these spaces take into account the geometry of the domain as well as the boundary and interface conditions. 
\\
As observed in \cite{7}, the space
$$ V=\  \{ v_1 \in H^1\left( \o_1 \right) |\;v_1=0\, \text{on}\, \partial\Omega\}$$
 is a Banach space endowed with the norm 
$$\| v_1 \|_{V}=\ \| \g v_1 \|_{\luno}.$$
Since we do not impose any regularity on the external boundary, the condition on $\partial \o$ in the definition of $\ve$ has to be
intended in a density sense. More precisely, in view of (\ref{eq2.1}), $\ve$ can be defined as the
closure of the set of the
functions in $C^{\infty}(\ouno)$ with a compact support contained
in $\o$ with respect to the $\huno$-norm.
We also set
\begin{equation}\label{heg}
H_{\Gamma}=\  \left\{v=
(v_1,v_2) \mid v_1 \in V \; \hbox {and} \; v_2 \in H^{1}(\odue)\right\}.
\end{equation} 
The space  $H_{\Gamma}$ is a Hilbert space when
equipped with the norm 
$$\| v \|^2_{H_{\Gamma}} =\ \| \g v_1 \|^2 _{\luno} + \| \g v_2 \|^2_{\ldue} 
+ \| v_1 -  v_2 \|^2 _{L^2(\Gamma)}. $$ 
Indeed $H_{\Gamma}$ can be identified with  $V
\times H^{1}( \odue)$, the norm above defined on $H_{\Gamma}$ being equivalent to the standard norm in $V
\times H^{1}( \odue)$ (see \cite{9} for details). We denote the dual of $H_{\Gamma}$ by $(H_{\Gamma})'$. It follows that (see \cite{DJ}),  the norms of $(H_{\Gamma})'$ and $V'\times (H^1(\Omega_2))'$ are equivalent. Moreover, if $v=(v_1,v_2)\in (H_{\Gamma})'$ and $u=(u_1,u_2)\in H_{\Gamma}$, then
$$
\left\langle v,u\right\rangle_{(H_{\Gamma})',H_{\Gamma}}=\left\langle v_1,u_1\right\rangle_{V',V}+\left\langle v_2,u_2\right\rangle_{H^1(\Omega_{2})',H^1(\Omega_{2})}.
$$

\begin{remark}\label{rem0}
We point out that $H_{\Gamma}$ is a separable and reflexive Hilbert space dense in $   L^{2}\left(\Omega_{1}\right)\times L^{2}\left(\Omega_{2}\right)$. Furthermore, $H_\Gamma \subseteq   L^{2}\left(\Omega_{1}\right)\times L^{2}\left(\Omega_{2}\right)$ with continuous imbedding. On the other hand, one has that
$  L^{2}\left(\Omega_{1}\right)\times L^{2}\left(\Omega_{2}\right)\subseteq\left(H_{\Gamma}\right)'$, with $  L^{2}\left(\Omega_{1}\right)\times L^{2}\left(\Omega_{2}\right)$ separable Hilbert space.
This means that the triple $(H_\Gamma,   L^{2}\left(\Omega_{1}\right)\times L^{2}\left(\Omega_{2}\right), \left(H_{\Gamma}\right)')$ is an evolution triple. We refer the reader to \cite{9,DFMC} for an in-depth analysis on this aspect. Also note that, in fact $L^{2}\left(\Omega_{1}\right)\times L^{2}\left(\Omega_{2}\right)$ can be identified with $L^{2}\left(\Omega\right)$ itself by observing that $v=(v_1,v_2)\in L^{2}\left(\Omega_{1}\right)\times L^{2}\left(\Omega_{2}\right)$ if and only if $v= v_1\chi_{\Omega_1} + v_2\chi_{\Omega_2} \in L^{2}\left(\Omega\right)$. By the way, due to the nature of our problem, throughout this work we prefer to adopt the notation  $v=(v_1,v_2)\in L^{2}\left(\Omega_{1}\right)\times L^{2}\left(\Omega_{2}\right)$.
\end{remark}

\noindent Let us set
\begin{equation}\label{W}
 W =\left\lbrace v=\left(v_{1},v_{2}\right)\in L^{2}\left(0,T;V\times H^{1}\left(\Omega_{2}\right)\right):\,
 v'=\left(v'_1,v'_2\right) \in L^{2}\left(0,T;  L^{2}\left(\Omega_{1}\right)\times L^{2}\left(\Omega_{2}\right)\right)\right\rbrace,
\end{equation}
\noindent
 which is a Hilbert space if equipped with the norm
$$\left\|v\right\|_{W}=\left\|v_{1}\right\|_{L^{2}\left(0,T;V\right)}+\left\|v_{2}\right\|_{L^{2}\left(0,T;H^{1}\left(\Omega_{2}\right)\right)}+\left\|v^{\prime}_{1}\right\|_{L^{2}\left(0,T;L^{2}\left(\Omega_{1}\right)\right)}+\left\|v^{\prime}_{2}\right\|_{L^{2}\left(0,T;L^{2}\left(\Omega_{2}\right)\right)}.$$
We assume that the initial data of problem \eqref{eq2.6} are such that
\begin{equation}\label{eq2.7}
\left\{
\begin{array}{@{}l}
\mbox{(i) }
U^{0} =\left( U^0_{ 1},  U^0_{ 2 }\right)\in   L^{2}\left(\Omega_{1}\right)\times L^{2}\left(\Omega_{2}\right),\\[3mm]
\mbox{(ii) }
U^{1} =\left( U^1_{ 1},  U^1_{ 2 }\right)\in\left(H_{\Gamma}\right)'
\end{array}\right.
\end{equation}
and that the control $\zeta$ is such that
\begin{equation}\label{ipozeta}
\zeta \in W'.
\end{equation}

\noindent Further, we suppose that $A$ is a symmetric  matrix field and there exist constants $\alpha,\beta\in\R$, with $0<\alpha<\beta$  such that
\begin{equation}\label{ipoA}
\left\{
\begin{array}{@{}l}
(i)\,\,a_{ij},\,\dfrac{a_{ij}}{\partial x_k}\in L^{\infty}\left(\Omega\right),\,\,\,1\leq i,j,k\leq n,\\[3mm]
(ii)\,\,(A(x)\lambda,\lambda)\geq \alpha {|\lambda|^{2}},\,
 |A(x) \lambda|\leq\beta|\lambda|,
\end{array}
\right.
\end{equation}
for every $\lambda\in\R^n $ and a.e. in $\Omega$. 
We put
\begin{equation}\label{defm}
M=\max_{1\leq i,j,k\leq n}\,\max_{x\in\Omega} \left|\dfrac{\partial a_{ ij }}{\partial x_k}\right|.
\end{equation}
\\
The function $h$ appearing in the interface condition satisfies
\begin{equation}\label{ipoh}
h \in
L^{\infty}(\Gamma) \;\hbox{and there exists} \; h_0 \in \R \;
\hb{ such that }
   0<h_0<h(x) \>\text{   a.e.\ in  } \;\Gamma.
   \end{equation}
   
\noindent    Note that  the initial data \eqref{eq2.7} are in a weak space, hence the solution of problem  \eqref{eq2.6} cannot be defined using the standard weak formulation. We need to apply the so called transposition method (see \cite{ki28}, Chapter 3, Section 9) usually used in controllability problems. In some sense, it is an adjoint method where the solution is defined via an adjoint problem which provides test functions. 
More precisely, we define the following standard adjoint  problem: for every $g =(g_{1},g_{2})\in L^2(0,T;  L^{2}\left(\Omega_{1}\right)\times L^{2}\left(\Omega_{2}\right))$,  consider the backward problem
\begin{equation}\label{backprobeps}
\left\{
\begin{array}{@{}ll}
L\psi_i \equiv \psi_{i}'' - \hbox {div} (A(x)\nabla \psi_{i} )=g_{i} & \hbox {in } Q_i, \; i=1,2\\[3pt]
A(x) \nabla \psi_{1} \;  \nuno =
   -A(x) \nabla \psi_{2} \;    \ndue & \hbox {on }
\Gamma_T, \\[3pt]
A(x) \nabla \psi_{1}  \;   \nuno
   = -h(x)(\psi_{1}-\psi_{2}) & \hbox {on }
\Gamma_T, \\[3pt]
\psi_{1} = 0 & \hbox {on } \Sigma,\\[3pt]
\psi_{i}(T)=\psi'_{i}(T)
= 0 & \hbox {in }  \Omega_{i},  \; i=1,2.

\end{array}
\right.
\end{equation}

As observed in \cite{9}, thanks to Remark \ref{rem0}, by using an approach to standard evolutionary problems based on evolution triples (there are no weak data),  the usual weak formulation of problem \eqref{backprobeps} is valid. Hence an abstract Galerkin's method provides the existence and uniqueness result for the weak solution  in $W$ of problem \eqref{backprobeps}, together with the a priori estimate for the solution in $W$.
For the sake of clarity, throughout the paper, we denote by $\psi(g) =(\psi_{1}(g),\psi_{2}(g))$, the solution of problem \eqref{backprobeps} and when there is no ambiguity, we omit the explicit dependence on the right hand member.

 \noindent Now we give the definition of solution of \eqref{eq2.6} in the sense of transposition.
\begin{definition}\label{deftraspeps}
For any fixed $\left(U^{0}, U^{1}\right)\in   \left(L^{2}\left(\Omega_{1}\right)\times L^{2}\left(\Omega_{2}\right)\right)\times \left(H_{\Gamma}\right)'$, we say that a function $u =(u_{1},u_{2})\in L^2(0,T;   L^{2}\left(\Omega_{1}\right)\times L^{2}\left(\Omega_{2}\right))$ is a solution of problem \eqref{eq2.6}, in the sense of transposition, if it satisfies the identity
\begin{equation}\label{soltras}
\begin{array}{c}
\displaystyle\int_{Q_1}u_{1}g_1dxdt+
\displaystyle\int_{Q_2}u_{2}g_2 dxdt =-\displaystyle\int_{\ouno}U^{0}_{1}\psi'_{1}(0)dx+
\left\langle U^{1}_{1},\psi_{1}(0)\right\rangle_{V',V}\\
\\ \qquad\qquad\qquad-\displaystyle\int_{\odue}U^{0}_{2}\psi'_{2}(0)dx
+\left\langle U^{1}_{2},\psi_{2}(0)\right\rangle_{(H^1(\odue))',H^1(\odue)}
+\left\langle \zeta \chi_{\omega}, \psi \right\rangle_{W',W},\end{array}
\end{equation}
for all $g =(g_{1},g_{2})\in L^2(0,T;  L^{2}\left(\Omega_{1}\right)\times L^{2}\left(\Omega_{2}\right))$, where $\psi$ is the solution of problem \eqref{backprobeps}. Here we have used the notation  $\zeta \chi_{\omega} =(\zeta_1 \chi_{\omega_1}, \zeta_2 \chi_{\omega_2})$.
\end{definition}
\noindent By classical results (see \cite{ki28}, Chapter 3, Section 9, Theorems 9.3 and 9.4), problem \eqref{eq2.6} admits a unique solution $u\in C\left([0,T];  L^{2}\left(\Omega_{1}\right)\times L^{2}\left(\Omega_{2}\right)\right)\cap C^{1}\left([0,T];\left(H_{\Gamma}\right)'\right)$ satisfying the estimate
 \begin{equation}\label{stimatrasp1}
 \|u\|_{L^\infty(0,T;   L^{2}\left(\Omega_{1}\right)\times L^{2}\left(\Omega_{2}\right))}+ \|u'\|_{L^\infty(0,T; (H_{\Gamma})') }\leq C( \|U^0\|_{  L^{2}\left(\Omega_{1}\right)\times L^{2}\left(\Omega_{2}\right)}+ \|U^1\|_{(H_{\Gamma})' }+ \|\zeta \chi_{\omega}\|_{W'}),
 \end{equation}
with $C$ positive constant.\\
We denote by $u\left(\zeta\right) =\left(\uuno\left(\zeta\right),\udue\left(\zeta\right)\right)$ the solution of problem \eqref{eq2.6} in the sense above defined and, when there is no ambiguity, we omit the explicit dependence on the control.  \\
Now, let us give the definition of exact controllability.
\begin{definition}
System \eqref{eq2.6} is said to be exactly controllable at time $T>0$, if for every $\left(U^{0},U^{1}\right)$, $\left(Z^{0},Z^{1}\right)$ in $\left(L^{2}\left(\Omega_{1}\right)\times L^{2}\left(\Omega_{2}\right)\right) \times (H_\Gamma)'$, there exists a control $\zeta$ belonging to $W'$ such that the corresponding solution $u$ of problem \eqref{eq2.6} satisfies
$$u(T)=Z^{0},\,\,\,u'(T)=Z^{1}.$$
\end{definition}

If the controllability is achieved for the zero (null) data $Z^0=0, Z^1 =0$, then it  is known as null controllability. Since our problem is linear, it is sufficient to look for controls driving the system
 to rest. Hence, in the sequel we prove the existence of a control $\zeta\in W'$ of  \eqref{eq2.6} such that $u(T)=u'(T)=0.$

\smallskip

In this paper, we wish to prove a controllability result, but as remarked in the introduction, it is not possible to achieve controllability without additional assumptions. Namely, 
if $\Omega_2$ is star-shaped with respect to a point $x^0 \in \Omega_2$ and under suitable geometrical  assumptions on the sets  $\omega_1$ and $\omega_2$, we are able to prove that system \eqref{eq2.6} is exact controllable for a time $T > 0$ sufficiently large (see Theorem \ref{mainteo}). To this aim, we will use a constructive method known as the Hilbert Uniqueness Method introduced by Lions (see \cite{ki27, ki27*}). We point out that the control obtained by HUM is also the energy minimizing control. HUM is fully a PDE based method and eventually it reduces to deriving the so-called observability estimate, which is the crucial point, corresponding to an uncontrolled problem (see \eqref{eq2.6h}). To get  the observability estimate, which is a delicate estimate from below, we need to establish some fundamental results based on the  Lagrange multipliers method. These results are proved in the following section.

\section{The observability inequality}\label{sec3}
For $T>0$, we consider the following homogeneous imperfect transmission problem 
\begin{equation}\label{eq2.6h}
\left\{
\begin{array}{@{}ll}
Lz_i\equiv z_i'' - \hbox {div} \left(A(x)
\nabla z_i \right)=
   0&\hbox {in } Q_i, \; i=1,2 \\[3pt]
A(x) \nabla z_1 \; \nuno =
   -A(x) \nabla z_2\;  \ndue& \hbox {on }
\Sigma_\Gamma, \\[3pt]
A(x)  \nabla  z_1  \; \nuno
   = -h(x) (z_1-z_2)& \hbox {on }
\Sigma_\Gamma, \\[3pt]
z_1 = 0&\hbox {on } \Sigma,\\[3pt]
z_i(0)= z_{i}^0,\quad z_i'(0)= z^1_{i}&\hbox {in }  \Omega_i,  \; i=1,2\\
\end{array}
\right.
\end{equation}
with the initial data
\begin{equation}\label{regip}
z^{0} =\left( z^0_{ 1},  z^0_{ 2 }\right)\in H_\Gamma, \quad
z^{1} =\left( z^1_{ 1},  z^1_{ 2 }\right)\in L^{2}\left(\Omega_{1}\right)\times L^{2}\left(\Omega_{2}\right),
\end{equation}
where $n_{i}$ is the unitary outward normal to $ 
\Omega_{i}$,  $i=1,2$.
The weak formulation of problem \eqref{eq2.6h} is given by
\begin{equation}\label{ipervar0}
\left\{
\begin{array}{@{}l}
\hbox{Find} \; z = ( z_{1}, z_{2}) \; \hbox {in} \,W  \hb{ such that }\\[3pt]
\langle z_{1}'', v_1 \rangle_{V', V} + \langle z_{2}'', v_2 \rangle_{(\hdue)',\hdue}
\displaystyle  + \int_{\ouno}{A(x) \nabla z_{1}  \nabla v_{1}}\; dx
 + \int_{\odue}{A(x) \nabla z_{2}  \nabla v_{2}} \; dx \\[9pt] \qquad + \displaystyle\int_{\Gamma}{h(x)(z_{1} - z_{2})(v_1 - v_2) \; d\sigma_x}=
    0 \;
\hb{ for all}\; (v_1,v_2) \in \ve \times \hdue \hb{ in }{\cal D}'(0,T),\\[3pt]
\\
z_i(0)= z_{i}^0,\quad z_i'(0)= z^1_{i} \hbox { in }  \Omega_i,  \; i=1,2.
\end{array} \right.
\end{equation}
As already observed, in \cite{9} the authors prove  the existence and uniqueness result for the weak solution in $W$ of problem \eqref{eq2.6h} together with some a priori estimates. 
\begin{theorem}[\cite{9}]
\label{3.1} Let $T>0$, $H_{\Gamma}$ and $W$ be defined as in \eqref{heg} and 
$(\ref{W})$. Under hypotheses $(\ref{ipoA}), \>(\ref{ipoh}) $ and  \eqref{regip}, problem $(\ref{eq2.6h})$ admits a unique weak solution $z\in W$. Moreover,
there exists a positive constant $C$, such that
\begin{equation}\label{stima}
\left\Vert z\right\Vert _{L^{\infty}(0,T;H_{\Gamma})}+\left\Vert z'\right\Vert
_{L^{\infty}(0,T; L^{2}\left(\Omega_{1}\right)\times L^{2}\left(\Omega_{2}\right))}\leq
 C\left(
\left\Vert z^{0}\right\Vert _{H_{\Gamma}
}+\left\Vert z^{1}\right\Vert _{ L^{2}\left(\Omega_{1}\right)\times L^{2}\left(\Omega_{2}\right)}\right).
\end{equation}

\end{theorem}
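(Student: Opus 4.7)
The plan is to apply a Faedo--Galerkin procedure adapted to the evolution triple $(H_\Gamma,\, L^{2}(\Omega_1)\times L^{2}(\Omega_2),\, (H_\Gamma)')$ from Remark \ref{rem0}. Since $H_\Gamma$ is separable and densely embedded in $L^{2}(\Omega_1)\times L^{2}(\Omega_2)$, I would pick a Hilbert basis $\{w^k\}_{k\in\mathbb{N}}=\{(w_1^k,w_2^k)\}$ of $H_\Gamma$, orthonormal in $L^{2}(\Omega_1)\times L^{2}(\Omega_2)$, set $V_n=\mathrm{span}\{w^1,\dots,w^n\}$, and look for $z_n(t)=\sum_{k=1}^n c_k^n(t)\,w^k$ solving the Galerkin system: for every $k\le n$,
\begin{equation*}
\int_{\Omega_1}\!z_{n,1}''\,w_1^k\,dx+\int_{\Omega_2}\!z_{n,2}''\,w_2^k\,dx+\sum_{i=1}^2\int_{\Omega_i}\!A\,\nabla z_{n,i}\!\cdot\!\nabla w_i^k\,dx+\int_\Gamma h(z_{n,1}-z_{n,2})(w_1^k-w_2^k)\,d\sigma_x=0,
\end{equation*}
with $z_n(0)$ and $z_n'(0)$ the finite-dimensional projections of $z^0$ and $z^1$, which converge to them in $H_\Gamma$ and $L^{2}(\Omega_1)\times L^{2}(\Omega_2)$ respectively. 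This is a linear second-order ODE system with a positive-definite mass matrix, so Cauchy--Lipschitz yields a unique local solution.

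Next I would derive the energy identity. Testing the Galerkin equations against $(c_k^n)'(t)$ and summing over $k$ is legitimate because $z_n'(t)\in V_n\subset H_\Gamma$; symmetry of $A$ and the time-independence of $A$ and $h$ then give
\begin{equation*}
\frac{d}{dt}\mathcal{E}_n(t)=0,\quad \mathcal{E}_n(t):=\frac12\sum_{i=1}^2\!\left(\|z_{n,i}'\|_{L^2(\Omega_i)}^2+\int_{\Omega_i}\!A\,\nabla z_{n,i}\!\cdot\!\nabla z_{n,i}\,dx\right)+\frac12\int_\Gamma\!h(z_{n,1}-z_{n,2})^2\,d\sigma_x.
\end{equation*}
The coercivity in \eqref{ipoA}(ii), the positivity assumption \eqref{ipoh}, and the equivalence of the $H_\Gamma$-norm with the $V\times H^1(\Omega_2)$-norm then yield $\mathcal{E}_n(t)\gtrsim\|z_n(t)\|_{H_\Gamma}^2+\|z_n'(t)\|_{L^{2}(\Omega_1)\times L^{2}(\Omega_2)}^2$, so the energies are uniformly bounded by $C(\|z^0\|_{H_\Gamma}^2+\|z^1\|_{L^{2}(\Omega_1)\times L^{2}(\Omega_2)}^2)$, the approximate solutions extend globally on $[0,T]$, and \eqref{stima} holds at the Galerkin level.

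For the passage to the limit, Banach--Alaoglu extracts a subsequence with $z_n\stackrel{*}{\rightharpoonup}z$ in $L^\infty(0,T;H_\Gamma)$ and $z_n'\stackrel{*}{\rightharpoonup}z'$ in $L^\infty(0,T;L^{2}(\Omega_1)\times L^{2}(\Omega_2))$, which is enough to pass to the limit in the Galerkin identity tested against $\varphi(t)w^k$, $\varphi\in C_c^\infty(0,T)$; the interface term passes because $v\mapsto v_1-v_2$ is continuous from $H_\Gamma$ into $L^2(\Gamma)$. By density of $\bigcup_n V_n$ in $H_\Gamma$ one recovers the full weak formulation \eqref{ipervar0}, and the identity read backwards shows $z''\in L^2(0,T;(H_\Gamma)')$; an integration by parts in time against test functions with $\varphi(0)\neq 0$ recovers the initial conditions. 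Passing the estimate to the weak-$*$ limit gives \eqref{stima}. For uniqueness, the difference of two solutions satisfies \eqref{eq2.6h} with zero data.

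The main obstacle is the usual hyperbolic pitfall: to justify energy conservation for the limit $z$ one cannot legally test \eqref{ipervar0} with $z'$, since $z'$ only lies in $L^\infty(0,T;L^{2}(\Omega_1)\times L^{2}(\Omega_2))$ and not in $L^2(0,T;H_\Gamma)$. I would handle this by the classical Lions--Magenes regularisation in time (convolution with a mollifier, deriving the identity for the mollified solution, then passing to the limit), which yields $\mathcal{E}(t)=\mathcal{E}(0)$ for the weak solution as well, hence $z\equiv 0$ in the uniqueness step and the global a priori estimate \eqref{stima} for the original solution. Everything else is standard once the interface term is recognised as continuous on $H_\Gamma\times H_\Gamma$.
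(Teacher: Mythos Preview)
The paper does not supply its own proof of this theorem: it is quoted from \cite{9}, and the only indication given is that the argument there is ``an abstract Galerkin's method'' based on the evolution triple $(H_\Gamma,\,L^{2}(\Omega_1)\times L^{2}(\Omega_2),\,(H_\Gamma)')$ of Remark~\ref{rem0}. Your Faedo--Galerkin scheme with energy conservation, weak-$*$ compactness, and the Lions--Magenes time-mollification for uniqueness is precisely that standard procedure, so your proposal matches the method the paper invokes.
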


\noindent Let us remark that, the solution of
problem \eqref{eq2.6h} has some further properties (see $\cite{ki28}$,
Chapter 3, Theorem 8.2). In fact, under the same
hypotheses of Theorem $\ref{3.1}$, the unique solution $z$ of
problem \eqref{eq2.6h} satisfies
\begin{equation*}
z\in C\left(  \left[  0,T\right]  ;H_{\Gamma}\right),\,
z'\in C\left(  \left[  0,T\right]  ;  L^{2}\left(\Omega_{1}\right)\times L^{2}\left(\Omega_{2}\right) \right).
\end{equation*}
\noindent Hence the initial values $z(0)$ and $z'(0)$ are meaningful in the appropriate spaces. 

Now, we derive an important identity using suitable multipliers. It is essential for establishing the inverse inequalities involved in the exact controllability problem. For convenience, we use the repeated index summation convention. Moreover, when there is no ambiguity, we omit the explicit dependence on the  space variable $x$ in the matrix $A$ and  in the function $h$.
\begin{lemma}\label{lemfirstid}
Let $q=(q_1, \ldots q_n)$ be a vector field in $(W^{1,\infty}(\Omega))^n$ and let $z=(z_1,z_2)$ be the solution of problem \eqref{eq2.6h}-\eqref{regip}. Then, the following identity holds
\begin{equation}\label{firstid}
\begin{array}{c}
\displaystyle\dfrac{1}{2}\int_\Sigma A n_1  n_1\left(\dfrac{\partial z_1}{\partial n_1}\right)^2q_k n_{1k}\;d\sigma_x\;dt+\dfrac{1}{2}\sum_{i=1}^{2}\int_{\Gamma_{T}} A n_i  n_i\left(\dfrac{\partial z_i}{\partial n_i}\right)^2q_k n_{ik}\;d\sigma_x\;dt+\\
\\
-\displaystyle\int_{\Gamma_T} h \left(z_1-z_2\right)q_k\left(\nabla_\sigma (z_1-z_2)\right)_k\;d\sigma_x\;dt+ \dfrac{1}{2}\sum_{i=1}^{2}\int_{\Gamma_T} \left(|z'_i|^2-A \nabla_\sigma z_i\nabla_\sigma z_i\right)q_kn_{ik}\;d\sigma_x\;dt=\\
\\
=\displaystyle\sum_{i=1}^2\left.\left(z'_i,q_k\dfrac{\partial z_i}{\partial x_k}\right)_{\Omega_i}\right |_{0}^{T} +\dfrac{1}{2}\sum_{i=1}^{2}\int_{Q_i} \left(|z'_i|^2-A \nabla z_i\nabla z_i\right)\dfrac{\partial q_k}{\partial x_k}\;dx\;dt+\\
\\
\qquad\qquad+\displaystyle\sum_{i=1}^{2}\int_{Q_i} A \nabla z_i\nabla q_k\dfrac{\partial z_i}{\partial x_k}\;dx\;dt-\dfrac{1}{2}\sum_{i=1}^{2}\int_{Q_i} q_k
\sum_{l,j=1}^{n}\dfrac{\partial a_{lj}}{\partial x_k}\dfrac{\partial z_i}{\partial x_l}\dfrac{\partial z_i}{\partial x_j}\;dx\;dt,
\end{array}
\end{equation}
where
$$
\left(z'_i,q_k\dfrac{\partial z_i}{\partial x_k}\right)_{\Omega_i}=\int_{\Omega_i}z'_i(t)q_k\dfrac{\partial z_i(t)}{\partial x_k}\,dx
$$
and  $\nabla_\sigma z_i=(\sigma_j z_i)_{j=1}^n$ denotes the tangential gradient of $z_i$ on $\Gamma$ for $i=1,2$ (see, for instance,  \cite{ki27*},  p. 137]).
\end{lemma}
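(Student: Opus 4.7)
The plan is to apply the classical Rellich/Lagrange-multiplier technique: multiply the wave equation $Lz_i=0$ by the test function $q_k\,\partial_{x_k} z_i$, integrate over $Q_i$, perform integration by parts in both $t$ and $x$, sum over $i=1,2$, and then use the interface conditions on $\Gamma_T$ and the Dirichlet condition on $\Sigma$ to rewrite the boundary contributions in the form stated in \eqref{firstid}. By a standard density argument, it suffices to carry out all the calculations assuming that $z_i$ is smooth; the result for the weak solution provided by Theorem \ref{3.1} then follows by approximation, exploiting the regularity of the matrix $A$ in \eqref{ipoA}(i), the linear structure of the problem, and the a priori estimate \eqref{stima}.

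First I would compute the two ``volume'' contributions. For the time derivative, an integration by parts in $t$ combined with an integration by parts in $x$ on $\partial_k(z_i')^2$ produces
\[
\int_{Q_i} z_i''\,q_k\partial_k z_i\,dx\,dt = \Bigl[(z_i',q_k\partial_k z_i)_{\Omega_i}\Bigr]_0^T+\tfrac12\int_{Q_i}\partial_k q_k\,(z_i')^2\,dx\,dt-\tfrac12\int_{\partial\Omega_i\times(0,T)}(z_i')^2 q_k n_{ik}\,d\sigma\,dt .
\]
For the elliptic term, Green's formula gives one boundary contribution $-\int_{\partial\Omega_i\times(0,T)}(A\nabla z_i\!\cdot\! n_i)(q_k\partial_k z_i)$ and an interior term $\int_{Q_i}A\nabla z_i\!\cdot\!\nabla(q_k\partial_k z_i)$. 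Expanding $\nabla(q_k\partial_k z_i)=(\nabla q_k)\partial_k z_i+q_k\nabla\partial_k z_i$ and using the symmetry $a_{lj}=a_{jl}$ together with the product rule
\[
a_{lj}\,\partial_j z_i\,\partial_l\partial_k z_i=\tfrac12\partial_k(a_{lj}\partial_l z_i\partial_j z_i)-\tfrac12(\partial_k a_{lj})\partial_l z_i\partial_j z_i ,
\]
and integrating the first summand by parts in $x_k$, I recover precisely the three interior integrals on the right-hand side of \eqref{firstid}, including the extra term $-\tfrac12\sum_i\int_{Q_i} q_k\partial_k a_{lj}\,\partial_l z_i\partial_j z_i$ produced by the variable coefficients, plus a boundary contribution $\tfrac12\int_{\partial\Omega_i\times(0,T)}(A\nabla z_i\!\cdot\!\nabla z_i)\,q_k n_{ik}$.

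The delicate part is the treatment of the boundary. Collecting all boundary integrands, I obtain on $\partial\Omega_i\times(0,T)$
\[
\tfrac12(z_i')^2 q_k n_{ik}+(A\nabla z_i\!\cdot\! n_i)(q_k\partial_k z_i)-\tfrac12(A\nabla z_i\!\cdot\!\nabla z_i)q_k n_{ik}.
\]
Splitting $\partial\Omega_1=\partial\Omega\cup\Gamma$ and using that $z_1\equiv 0$ on $\Sigma$ (hence $z_1'=0$ and $\nabla z_1=(\partial z_1/\partial n_1)\,n_1$), the $\Sigma$-part collapses to the first surface integral of \eqref{firstid}. On $\Gamma_T$ I decompose $\nabla z_i=\nabla_\sigma z_i+(\partial z_i/\partial n_i)\,n_i$; a direct expansion of the two bilinear expressions above, again using $A=A^T$, cancels the mixed term $(\partial z_i/\partial n_i)(A\nabla_\sigma z_i\!\cdot\! n_i)\,q_k n_{ik}$ and leaves
\[
\tfrac12\bigl(|z_i'|^2-A\nabla_\sigma z_i\!\cdot\!\nabla_\sigma z_i\bigr)q_k n_{ik}+\tfrac12\,An_i\!\cdot\! n_i\bigl(\tfrac{\partial z_i}{\partial n_i}\bigr)^{\!2}q_k n_{ik}+(A\nabla z_i\!\cdot\! n_i)(q\!\cdot\!\nabla_\sigma z_i).
\]
The last term is the only one coupling the two components, and here I would apply both interface conditions: the continuity of flux $A\nabla z_1\!\cdot\! n_1=-A\nabla z_2\!\cdot\! n_2$ together with $A\nabla z_1\!\cdot\! n_1=-h(z_1-z_2)$ gives
\[
\sum_{i=1}^{2}(A\nabla z_i\!\cdot\! n_i)(q\!\cdot\!\nabla_\sigma z_i)=-h(z_1-z_2)\,q\!\cdot\!\nabla_\sigma(z_1-z_2),
\]
which is exactly the third surface integral in \eqref{firstid}. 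Rearranging the resulting equality \emph{boundary terms} $=$ \emph{volume terms} yields \eqref{firstid}. The main obstacle I foresee is exactly this bookkeeping on $\Gamma_T$: one must carefully exploit the symmetry of $A$ in the tangential/normal decomposition so that the cross terms cancel, and then combine the two transmission conditions in the right order so that the remaining conormal contributions collapse into the single tangential-jump integral against $h$.
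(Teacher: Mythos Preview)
Your proposal is correct and follows essentially the same route as the paper: multiply $Lz_i=0$ by $q_k\partial_{x_k}z_i$, integrate by parts in $t$ and $x$ (using the symmetry of $A$ and the product rule on $a_{lj}\partial_j z_i\,\partial_l\partial_k z_i$), then on $\Sigma$ use $z_1=0$ to reduce $\nabla z_1$ to its normal part, and on $\Gamma_T$ perform the tangential/normal splitting $\nabla z_i=\nabla_\sigma z_i+(\partial_{n_i}z_i)n_i$ so that the symmetric cross term cancels and the remaining conormal piece, after applying both transmission conditions, becomes $-h(z_1-z_2)\,q\cdot\nabla_\sigma(z_1-z_2)$. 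The paper carries out exactly this computation, first for smooth data and then invoking the density argument you mention.
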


\begin{proof}
We prove the result for a strong solution of problem \eqref{eq2.6h}, that is under the following more regular initial data \begin{equation}\label{regin}
\left\{
\begin{array}{@{}l}

z^{0} =\left( z^0_{ 1},  z^0_{ 2 }\right)\in (H^2(\Omega_1)\cap V)\times H^2(\Omega_2),\\[3pt]

z^{1} =\left( z^1_{ 1},  z^1_{ 2 }\right)\in V\times H^{1}\left(\Omega_2\right).
\end{array}\right.
\end{equation}
Indeed one can easily prove that it holds also considering the weaker hypotheses \eqref{regip} (see for instance \cite{ki27*}). 

Let us multiply the first equation in \eqref{eq2.6h} by $q_k\dfrac{\partial z_1}{\partial x_k}$ and then integrate on $[0,T]$ to get

\begin{equation}\label{fid1}
\begin{array}{l}
\displaystyle \int_0^T\langle z_{1}'', q_k\dfrac{\partial z_1}{\partial x_k} \rangle_{V', V}\;dt +  \int_{Q_1}A \nabla z_{1}  \nabla \left(q_k\dfrac{\partial z_1}{\partial x_k}\right)\; dx\;dt\\
\\
\qquad \qquad\qquad-\displaystyle\int_{\Sigma}A\nabla z_1q_k\dfrac{\partial z_1}{\partial x_k}  n_1\; d\sigma_x\;dt-\int_{\Gamma_T}A\nabla z_1q_k\dfrac{\partial z_1}{\partial x_k}  n_1\; d\sigma_x\;dt=
  0. 
\end{array}
\end{equation}
For clearness sake, let us rewrite the above identity as
\[I_1 +I_2 + I_3 + I_4 = 0.\]
Applying integration by parts and Gauss-Green theorem repeatedly, we get 
\begin{equation*}
\begin{array}{ll}
I_1 &= \displaystyle \left.\int_{\Omega_1}z_{1}' q_k\dfrac{\partial z_1}{\partial x_k}\;dx \right|_{0}^T - \int_{Q_1}z'_1q_k\dfrac{\partial z'_1}{\partial x_k} \;dx\;dt\\
\\
&= \displaystyle \left.\int_{\Omega_1}z_{1}' q_k\dfrac{\partial z_1}{\partial x_k}\;dx \right|_{0}^T -\dfrac{1}{2} \int_{Q_1}\dfrac{\partial}{\partial x_k} \left|z'_1\right|^2 q_k\;dx\;dt \\
\\
&=\displaystyle \left.\int_{\Omega_1}z_{1}' q_k\dfrac{\partial z_1}{\partial x_k}\;dx \right|_{0}^T +\dfrac{1}{2} \int_{Q_1}\left|z'_1\right|^2\dfrac{\partial q_k}{\partial x_k}\;dx\;dt
\displaystyle-\dfrac{1}{2}\int_\Sigma \left|z'_1\right|^2q_k n_{1k}\;d\sigma_ x\;dt\\
\\
&\quad\qquad
\displaystyle -\dfrac{1}{2}\int_{\Gamma_T} \left|z'_1\right|^2q_k n_{1k}\;d\sigma_ x\;dt.
\end{array}
\end{equation*}
Since  $z_1=0$ on $\Sigma$ implies $z'_1=0$  on $\Sigma$ by stronger regularity assumptions, the third term vanishes. Hence we get
\[ I_1 = \displaystyle \left.\int_{\Omega_1}z_{1}' q_k\dfrac{\partial z_1}{\partial x_k}\;dx \right|_{0}^T +\dfrac{1}{2} \int_{Q_1}\left|z'_1\right|^2\dfrac{\partial q_k}{\partial x_k}\;dx\;dt -\dfrac{1}{2}\int_{\Gamma_T} \left|z'_1\right|^2q_k n_{1k}\;d\sigma_ x\;dt.\]

Now, we compute $I_2$
\begin{equation*}
\begin{array}{ll}
I_2 &= \displaystyle \int_{Q_1}A \nabla z_{1}  \nabla q_k\dfrac{\partial z_1}{\partial x_k}\; dx\;dt+\int_{Q_1}A \nabla z_{1}  q_k\nabla \dfrac{\partial z_1}{\partial x_k}\; dx\;dt\\
\\
&= \displaystyle \int_{Q_1}A \nabla z_{1}  \nabla q_k\dfrac{\partial z_1}{\partial x_k}\; dx\;dt+
\int_{Q_1}A \nabla z_{1}  q_k\dfrac{\partial}{\partial x_k}\nabla  z_1\; dx\;dt \\
\\
&= 
\displaystyle\int_{Q_1}A \nabla z_{1}  \nabla q_k\dfrac{\partial z_1}{\partial x_k}\; dx\;dt-\dfrac{1}{2}\int_{Q_1}A\nabla z_1\nabla z_1 \dfrac{\partial q_k}{\partial x_k} \; dx\;dt\displaystyle - \dfrac{1}{2}\int_{Q_1}q_k\sum_{l,j=1}^n \dfrac{\partial a_{lj}}{\partial x_k} \dfrac{\partial z_1}{\partial x_l} \dfrac{\partial z_1}{\partial x_j}\; dx\;dt\\
\\
&\qquad\qquad 
\displaystyle 
+\displaystyle \dfrac{1}{2}\int_\Sigma A\nabla z_1\nabla z_1 q_kn_{1k}\;d\sigma_x\;dt+
\dfrac{1}{2}\int_{\Gamma_T} A\nabla z_1\nabla z_1 q_kn_{1k}\;d\sigma_x\;dt.
\end{array}
\end{equation*}

Moreover, since $z_1=0$ on $\Sigma$, one has  $\nabla z_1=\dfrac{\partial z_1}{\partial n_1} n_1$ on $\Sigma$, that is $\dfrac{\partial z_1}{\partial x_k} = \dfrac{\partial z_1}{\partial n_1} n_{1k}$, hence $I_3$ becomes 
\[ I_3 =  -\displaystyle\int_{\Sigma}A n_1n_1q_k n_{1k}\left(\dfrac{\partial z_1}{\partial n_1}\right)^2\; d\sigma_x\;dt.\]
%
Also note that the fourth term  in the last expression for $I_2$ is $-\frac{1}{2} I_3$. Combining the computations for $I_1, I_2, I_3$ in \eqref{fid1}, we can get the following identity for $z_1$
\begin{equation}\label{fid3'}
\begin{array}{c}
\displaystyle \left.\int_{\Omega_1}z_{1}' q_k\dfrac{\partial z_1}{\partial x_k}\;dx \right|_{0}^T +\dfrac{1}{2} \int_{Q_1}\left|z'_1\right|^2\dfrac{\partial q_k}{\partial x_k}\;dx\;dt -\dfrac{1}{2}\int_{\Gamma_T} \left|z'_1\right|^2q_k n_{1k}\;d\sigma_ x\;dt\\
\\
+
\displaystyle\int_{Q_1}A \nabla z_{1}  \nabla q_k\dfrac{\partial z_1}{\partial x_k}\; dx\;dt-\dfrac{1}{2}\int_{Q_1}A\nabla z_1\nabla z_1 \dfrac{\partial q_k}{\partial x_k} \; dx\;dt\\
\\
\displaystyle -\dfrac{1}{2}\int_{Q_1}q_k\sum_{l,j=1}^n \dfrac{\partial a_{l,j}}{\partial x_k} \dfrac{\partial z_1}{\partial x_l} \dfrac{\partial z_1}{\partial x_j}\; dx\;dt
+\displaystyle \dfrac{1}{2}\int_\Sigma A\nabla z_1\nabla z_1 q_kn_{1k}\;d\sigma_x\;dt+
\dfrac{1}{2}\int_{\Gamma_T} A\nabla z_1\nabla z_1 q_kn_{1k}\;d\sigma_x\;dt\\
\\
-\displaystyle\int_{\Sigma}A n_1n_1q_k n_{1k}\left(\dfrac{\partial z_1}{\partial n_1}\right)^2\; d\sigma_x\;dt-\int_{\Gamma_T}A\nabla z_1  n_1 q_k\dfrac{\partial z_1}{\partial x_k}\; d\sigma_x\;dt=
    0. 
\end{array}
\end{equation}
Analogously, multiplying the second equation in \eqref{eq2.6h} by $q_k\dfrac{\partial z_2}{\partial x_k}$ and then integrating on $[0,T]$, we get
\begin{equation}\label{fid4'}
\begin{array}{c}
\displaystyle \left.\int_{\Omega_2}z_{2}' q_k\dfrac{\partial z_2}{\partial x_k}\;dx \right|_{0}^T +\dfrac{1}{2} \int_{Q_2}\left|z'_2\right|^2\dfrac{\partial q_k}{\partial x_k}\;dx\;dt -\dfrac{1}{2}\int_{\Gamma_T} \left|z'_2\right|^2q_k n_{2k}\;d\sigma_ x\;dt\\
\\
+
\displaystyle\int_{Q_2}A \nabla z_{2}  \nabla q_k\dfrac{\partial z_2}{\partial x_k}\; dx\;dt-\dfrac{1}{2}\int_{Q_2}A\nabla z_2\nabla z_2 \dfrac{\partial q_k}{\partial x_k} \; dx\;dt\\
\\
\displaystyle-\dfrac{1}{2}\int_{Q_2}q_k\sum_{l,j=1}^n \dfrac{\partial a_{l,j}}{\partial x_k} \dfrac{\partial z_2}{\partial x_l} \dfrac{\partial z_2}{\partial x_j}\; dx\;dt+
\dfrac{1}{2}\int_{\Gamma_T} A\nabla z_2\nabla z_2 q_kn_{2k}\;d\sigma_x\;dt\\
\\
   \displaystyle-\int_{\Gamma_T}A\nabla z_2   n_2q_k\dfrac{\partial z_2}{\partial x_k}\; d\sigma_x\;dt=
0. 
\end{array}
\end{equation}
Let us note that this last identity is similar to \eqref{fid3'} except for the integral terms defined on the boundary $\Sigma$. This is due to the fact that $z_2$ is defined on $\Omega_2$ whose boundary is only $\Gamma$. 

\noindent By summing up the identities \eqref{fid3'} and \eqref{fid4'}, we obtain
\begin{equation}\label{fid5'}
\begin{array}{c}
\displaystyle\sum_{i=1}^2 \left.\left(z'_i,q_k\dfrac{\partial z_i}{\partial x_k}\right)_{\Omega_i}\right|_0^T+\dfrac{1}{2}\sum_{i=1}^2\int_{Q_i}\left(|z'_i|^2-A\nabla z_i\nabla z_i\right)\dfrac{\partial q_k}{\partial x_k}\;dx\;dt+ \sum_{i=1}^2\int_{Q_i} A\nabla z_i \nabla q_k\dfrac{\partial z_i}{\partial x_k}\;dx\;dt\\
\\
\displaystyle -\sum_{i=1}^2\dfrac{1}{2}\int_{Q_i}q_k\sum_{l,j=1}^n \dfrac{\partial a_{l,j}}{\partial x_k} \dfrac{\partial z_i}{\partial x_l} \dfrac{\partial z_i}{\partial x_j}\; dx\;dt
-\dfrac{1}{2}\sum_{i=1}^2\int_{\Gamma_T} \left|z'_i\right|^2q_k n_{ik}\;d\sigma_ x\;dt\\
\\
+\displaystyle \dfrac{1}{2}\sum_{i=1}^2\int_{\Gamma_T} A\nabla z_i\nabla z_i q_kn_{ik}\;d\sigma_x\;dt-\sum_{i=1}^2\int_{\Gamma_T}A\nabla z_i   n_iq_k\dfrac{\partial z_i}{\partial x_k}\; d\sigma_x\;dt\\
\\
\displaystyle-\dfrac{1}{2}\int_{\Sigma}A n_1n_1q_k n_{1k}\left(\dfrac{\partial z_1}{\partial n_1}\right)^2\; d\sigma_x\;dt=0. 
\end{array}
\end{equation}
The above identity is nearly close to the claimed one except for the third line. Nevertheless, if we observe that, for any fixed $i=1,2$ we have 
\begin{equation}\label{relgrad}
\nabla z_i=\dfrac{\partial z_i}{\partial n_i}n_i+\nabla_\sigma z_i
\end{equation}
on the interface $\Gamma$, by the symmetry of $A$, the third line of \eqref{fid5'} becomes 

\begin{equation}\label{fid6}
\begin{array}{l}
\displaystyle \dfrac{1}{2}\int_{\Gamma_T} A\nabla z_i\nabla z_i q_kn_{ik}\;d\sigma_x\;dt-\int_{\Gamma_T}A\nabla z_i   n_iq_k\dfrac{\partial z_i}{\partial x_k}\; d\sigma_x\;dt =\\
 \\
 
\displaystyle=\dfrac{1}{2}\int_{\Gamma_T} A\left(\dfrac{\partial z_i}{\partial n_i}n_i+\nabla_\sigma z_i\right)\left(\dfrac{\partial z_i}{\partial n_i}n_i+\nabla_\sigma z_i\right)q_kn_{ik}\;d\sigma_x\;dt\\
\\
\qquad\qquad\displaystyle-\int_{\Gamma_T}A\left(\dfrac{\partial z_i}{\partial n_i}n_i+\nabla_\sigma z_i\right)   n_iq_k\left(\dfrac{\partial z_i}{\partial n_i}n_{ik}+(\nabla_\sigma z_i)_k\right)\; d\sigma_x\;dt\\
\\
\displaystyle=\dfrac{1}{2}\int_{\Gamma_T} An_in_i\left(\dfrac{\partial z_i}{\partial n_i}\right)^2q_kn_{ik}\;d\sigma_x\;dt+\int_{\Gamma_T} An_i\nabla_\sigma z_i\dfrac{\partial z_i}{\partial n_i}q_kn_{ik}\;d\sigma_x\;dt+\dfrac{1}{2}\int_{\Gamma_T} A\nabla_\sigma z_i\nabla_\sigma z_i q_kn_{ik}\;d\sigma_x\;dt\\
\\
\qquad\displaystyle-\int_{\Gamma_T} An_in_i\left(\dfrac{\partial z_i}{\partial n_i}\right)^2q_kn_{ik}\;d\sigma_x\;dt-\int_{\Gamma_T}A\nabla_\sigma z_i n_i q_k\dfrac{\partial z_i}{\partial n_i}n_{ik}\;d\sigma_x\;dt-\int_{\Gamma_T}A\nabla z_i   n_iq_k(\nabla_\sigma z_i)_k\; d\sigma_x\;dt\\
\\
=\displaystyle-\dfrac{1}{2}\int_{\Gamma_T} An_in_i\left(\dfrac{\partial z_i}{\partial n_i}\right)^2q_kn_{ik}\;d\sigma_x\;dt+\dfrac{1}{2}\int_{\Gamma_T} A\nabla_\sigma z_i\nabla_\sigma z_i q_kn_{ik}\;d\sigma_x\;dt-\int_{\Gamma_T}A\nabla z_i   n_iq_k(\nabla_\sigma z_i)_k\; d\sigma_x\;dt.
\end{array}
\end{equation}
By putting \eqref{fid6} into \eqref{fid5'}, taking into account the interface condition in problem \eqref{eq2.6h} and since $n_2=-n_1$, we finally obtain the required identity
\begin{equation*}\label{fid7}
\begin{array}{l}
\displaystyle\sum_{i=1}^2 \left.\left(z'_i,q_k\dfrac{\partial z_i}{\partial x_k}\right)_{\Omega_i}\right|_0^T+\dfrac{1}{2}\sum_{i=1}^2\int_{Q_i}\left(|z'_i|^2-A\nabla z_i\nabla z_i\right)\dfrac{\partial q_k}{\partial x_k}\;dx\;dt+ \sum_{i=1}^2\int_{Q_i} A\nabla z_i \nabla q_k\dfrac{\partial z_i}{\partial x_k}\;dx\;dt\\
\\
\qquad\qquad\displaystyle -\sum_{i=1}^2\dfrac{1}{2}\int_{Q_i}q_k\sum_{l,j=1}^n \dfrac{\partial a_{l,j}}{\partial x_k} \dfrac{\partial z_i}{\partial x_l} \dfrac{\partial z_i}{\partial x_j}\; dx\;dt
-\dfrac{1}{2}\sum_{i=1}^2\int_{\Gamma_T} \left|z'_i\right|^2q_k n_{ik}\;d\sigma_ x\;dt+\\
\\
\qquad\qquad\displaystyle-\sum_{i=1}^2\dfrac{1}{2}\int_{\Gamma_T} An_in_i\left(\dfrac{\partial z_i}{\partial n_i}\right)^2q_kn_{ik}\;d\sigma_x\;dt+\sum_{i=1}^2\dfrac{1}{2}\int_{\Gamma_T} A\nabla_\sigma z_i\nabla_\sigma z_i q_kn_{ik}\;d\sigma_x\;dt+\\
\\
\qquad\qquad+\displaystyle\int_{\Gamma_T}h\left(z_1-z_2\right)q_k\left(\nabla_\sigma (z_1-z_2)\right)_k\; d\sigma_x\;dt-\int_{\Sigma}A n_1n_1q_k n_{1k}\left(\dfrac{\partial z_1}{\partial n_1}\right)^2\; d\sigma_x\;dt=0.
\end{array}
\end{equation*}
This completes the proof of the lemma. \end{proof}

\noindent At this point we want  to apply the above identity for a particular choice of the vector field $q$ in order to derive the observability estimate. To this aim we adapt to our context some arguments introduced in \cite{ki27*} and \cite{ki27**}. 
\\
Let $x^0\in \mathbb{R}^n$ and set
\begin{equation}\label{m}
m(x)=x-x^0=(x_k-x^0_k)_{k=1}^n.
\end{equation}
\noindent We divide the boundary $\partial\Omega$ into two parts, i.e.  
$$\partial\Omega(x^0)=\{x\in \partial\Omega:\,m(x)  n_1(x)=m_k(x)n_{1k}(x)>0\} \hbox{ and  }
\partial\Omega_\ast(x^0)=\partial\Omega\setminus\partial\Omega(x^0)$$ and we denote 
$$\Sigma(x^0)=\partial\Omega(x^0)\times (0,T) \hbox{ and  }\Sigma_\ast(x^0)=\partial\Omega_\ast(x^0)\times (0,T).$$
Further, let us define 
\begin{equation}\label{defRi}
R_i(x^0)=\max_{x\in\overline{\Omega}_i} |m(x)|\text{ for }
i=1,2
\hbox{ and  }
R(x^0)=\max_{x\in\overline{\Omega}} |m(x)|.
\end{equation}
Some remarks are in order. Usually, in the context of controllability problems, the point $x^0$ can be viewed as an observer and $\partial\Omega(x^0)$ is strictly related to the action region, where the control is acting. The choice of  $x^0$ gives various control regions according to the position of the observer and has advantages and disadvantages.
For example, if $\Omega$ is a circle, geometrically, $\partial\Omega(x^0)$ is concave to the observer. More in particular, if $x^0$ is a point inside $\Omega$, then $\partial\Omega(x^0) = \partial\Omega$, since the entire boundary is concave to any point inside. On the other hand, if $x^0$ is outside $\Omega$, then drawing the tangents from $x^0$, the boundary is divided into two parts, where $\partial\Omega(x^0)$ is concave to $x^0$ (related to the control region)  and $\partial\Omega_\ast (x^0)$ is convex to $x^0$ (not related to the control region). 
When dealing with internal controllability, the control region is a neighbourhood of  $\partial\Omega(x^0)$. In our case, due to the geometry of the domain, we need to introduce a further control set which is a neighbourhood of the whole interface.
As we will see later on, the choice of $x^0$ will play a fundamental role also on the control time (see Lemmas \ref{invineq4} and \ref{invineq5**}). 
In the following, we introduce the energy $E(t)$ of problem \eqref{eq2.6h}-\eqref{regip}
\begin{equation}\label{energyz}
\begin{array}{ll}
E(t) =&\displaystyle\dfrac{1}{2}\left[ \int_{\ouno}|z'_{1}(t)|^{2}dx+\int_{\odue}|z'_{2}(t)|^{2}dx+\int_{\ouno}A\nabla z_{1}(t)\,\nabla z_{1}(t)dx\right.\\
\\
&\qquad \qquad\left.\displaystyle+\int_{\odue}A\nabla z_{2}(t)\,\nabla z_{2}(t)dx+\int_{\Gamma}h\left|z_{1}(t)-z_{2}(t)\right|^{2}d\sigma_{x}\right].
\end{array}
\end{equation}
Let us note that $E(t)$ is conserved (see \cite{DFMC}, Lemma 4.1), that is 
\begin{equation}\label{cons}
E(t) = E(0), \quad \hbox{for all}\; t \in [0,T].
\end{equation}
We set
\begin{equation}\label{inv1}
\begin{array}{l}
S= \displaystyle\dfrac{1}{2}\int_\Sigma A  n_1  n_1\left(\dfrac{\partial z_1}{\partial n_1}\right)^2 m_k n_{1k}\;d\sigma_x\;dt+\dfrac{1}{2}\sum_{i=1}^{2}\int_{\Gamma_T} A  n_i  n_i\left(\dfrac{\partial z_i}{\partial n_i}\right)^2 m_k n_{ik}\;d\sigma_x\;dt+\\
\\
\qquad-\displaystyle\int_{\Gamma_T} h  \left(z_1-z_2\right)m_k\left(\nabla_\sigma (z_1-z_2)\right)_k\;d\sigma_x\;dt+ \dfrac{1}{2}\sum_{i=1}^{2}\int_{\Gamma_T} \left(|z'_i|^2-A  \nabla_\sigma z_i\nabla_\sigma z_i\right)m_k n_{ik}\;d\sigma_x\;dt.
\end{array}
\end{equation}

 We want to find a lower bound for $S$. To this aim we introduce a technical geometrical assumption concerning not only the position of the observer $x^0$ but also the geometry of the domain $\Omega_2$. This geometrical property will characterize the choice of the control region related to the interface (see Definition \ref{omega2} and Lemma \ref{invineq3}).

\begin{lemma}\label{invineq1}
Let us suppose that $\Omega_2$ is star-shaped with respect to a point $x^0\in \Omega_2$. Let $z=(z_1,z_2)$ the solution of problem \eqref{eq2.6h}-\eqref{regip}. Then, for any $T>0$, it holds
\begin{equation}\label{invineq1*}
\begin{array}{c}

S \geq \left[T\left(1-\dfrac{n R(x^0)M}{\alpha}\right)-2\max \left(\dfrac{ R(x^0)}{\sqrt{\alpha}}, \dfrac{(n-1)\sqrt{\alpha}}{2 h_0}\right)\right]E(0), 
\end{array}
\end{equation}
with $M$ defined as in \eqref{defm} and $S$ as in \eqref{inv1}.
\end{lemma}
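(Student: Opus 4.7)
The plan is to apply Lemma~\ref{lemfirstid} with the radial vector field $q(x) = m(x) = x - x^0$, for which $\partial q_k/\partial x_k = n$ (summed in $k$) and $\nabla q_k = e_k$, so that the third volume integral on the right of \eqref{firstid} collapses to $\sum_i \int_{Q_i} A\nabla z_i\cdot\nabla z_i\,dx\,dt$. Introducing the shorthand $X=\sum_i\int_{Q_i}|z'_i|^2\,dx\,dt$, $Y=\sum_i\int_{Q_i}A\nabla z_i\cdot\nabla z_i\,dx\,dt$, $Z=\int_{\Gamma_T}h(z_1-z_2)^2\,d\sigma\,dt$, and $\mathcal{M}$ for the matrix-derivative volume term, the identity becomes
\[
S \;=\; \sum_i \bigl(z'_i,\,m\cdot\nabla z_i\bigr)_{\Omega_i}\Big|_0^T + \tfrac{n}{2}X + \bigl(1-\tfrac{n}{2}\bigr)Y - \tfrac{1}{2}\mathcal{M}.
\]

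To symmetrize the coefficients of $X$ and $Y$, I would add $\tfrac{n-1}{2}$ times the auxiliary identity
\[
\sum_i (z'_i,z_i)_{\Omega_i}\Big|_0^T - X + Y + Z = 0,
\]
obtained by multiplying $Lz_i=0$ by $z_i$, integrating over $Q_i$, and invoking $z_1=0$ on $\Sigma$ together with the transmission conditions on $\Gamma_T$ (which produce the jump term $Z$). This yields
\[
S = \sum_i \Bigl(z'_i,\,m\cdot\nabla z_i + \tfrac{n-1}{2} z_i\Bigr)_{\Omega_i}\Big|_0^T + \tfrac{1}{2}(X+Y) + \tfrac{n-1}{2}Z - \tfrac{1}{2}\mathcal{M}.
\]
Since $n\ge 2$ and $Z\ge 0$, the volume contribution satisfies $\tfrac{1}{2}(X+Y) + \tfrac{n-1}{2}Z \ge \tfrac{1}{2}(X+Y+Z)$, and energy conservation \eqref{cons} gives $X+Y+Z = \int_0^T 2E(t)\,dt = 2TE(0)$, so this part is at least $TE(0)$.

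For the matrix-derivative correction I would use $|m(x)| \le R(x^0)$, the bound $|\partial_k a_{lj}|\le M$ from \eqref{defm}, and the coercivity \eqref{ipoA} to estimate $|\mathcal{M}|\le \tfrac{nR(x^0)M}{\alpha}\,Y \le \tfrac{2nR(x^0)M}{\alpha}\,TE(0)$, which produces the $-T\tfrac{nR(x^0)M}{\alpha}E(0)$ contribution. For the endpoint term I would split the multiplier into $m\cdot\nabla z_i$ and $\tfrac{n-1}{2}z_i$. Cauchy-Schwarz, $|m|\le R(x^0)$, and the weighted AM-GM estimate $\|z'_i\|\|\nabla z_i\| \le \tfrac{1}{2\sqrt\alpha}(\|z'_i\|^2+\alpha\|\nabla z_i\|^2)$, combined with $\sum_i(\|z'_i\|^2+A\nabla z_i\cdot\nabla z_i)\le 2E(t)$, bound the first piece at any $t$ by $\tfrac{R(x^0)}{\sqrt\alpha}E(0)$. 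The second piece requires a Poincar\'e-type estimate on $H_\Gamma$: since $z_1\in V$ vanishes on $\partial\Omega$ and $\|z_1-z_2\|_{L^2(\Gamma)}^2\le h_0^{-1}\int_\Gamma h(z_1-z_2)^2\,d\sigma$, a trace/Friedrichs argument on $\Omega_2$ gives an estimate of the form $\tfrac{n-1}{2}|\sum_i (z'_i,z_i)_{\Omega_i}(t)| \le \tfrac{(n-1)\sqrt\alpha}{2h_0}E(0)$. Applying $a+b\le 2\max(a,b)$ and summing the $t=0$ and $t=T$ endpoints produces the stated $2\max(\cdot,\cdot)E(0)$ subtraction.

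The main obstacle is packaging the Poincar\'e/trace estimate on the space $H_\Gamma$ so that it reproduces exactly the constant $\tfrac{(n-1)\sqrt\alpha}{2h_0}$: this is the point at which the imperfect interface genuinely departs from the classical case, because the jump term $Z$ must substitute for a missing zero-trace bound on $z_2$ and inject the $h_0^{-1}$ factor naturally. A secondary subtlety is the factor $n$ in the bound on $|\mathcal{M}|$, which needs the symmetry of $A$ and careful index bookkeeping to avoid extra powers of $n$ when contracting over $l,j,k$.
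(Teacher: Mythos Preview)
Your algebraic reduction to $S = \sum_i Z_i + TE(0) + (\text{nonnegative}) - \tfrac{1}{2}\mathcal{M}$, with $Z_i = \bigl(z'_i,\,m\cdot\nabla z_i+\tfrac{n-1}{2}z_i\bigr)_{\Omega_i}\big|_0^T$, is correct and coincides with the paper's, as does the treatment of $\mathcal{M}$. The gap is entirely in the endpoint estimate $|Z_1+Z_2|$.

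Splitting the multiplier into $m\cdot\nabla z_i$ and $\tfrac{n-1}{2}z_i$ cannot recover the stated constant. The Poincar\'e/trace bound you invoke, equivalent to $\bigl|\sum_i(z'_i,z_i)_{\Omega_i}\bigr|\le \tfrac{\sqrt\alpha}{h_0}E(0)$, is false in general: take initial data $z_1^0\equiv 0$, $z_2^0\equiv c$, $z_1^1\equiv 0$, $z_2^1\equiv c'$ with $c,c'$ constants; the inequality then forces $|\Omega_2|\le \tfrac{\alpha}{h_0^2}\int_\Gamma h\,d\sigma$, which nothing in the hypotheses guarantees (let $h_0\to\infty$). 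Any genuine Friedrichs argument on $H_\Gamma$ would inject domain-dependent Poincar\'e and trace constants that do not appear in the statement. Even granting both pieces, bounding them by $aE(0)$ and $bE(0)$ at a fixed time and then evaluating at $t=0,T$ yields $2(a+b)\le 4\max(a,b)$, not $2\max$. Most tellingly, your argument never invokes the star-shapedness of $\Omega_2$, which is the only geometric hypothesis in the lemma.

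The paper keeps the multiplier intact. Young's inequality with parameter $\mu$ gives $\bigl|(z'_i,m\cdot\nabla z_i+\tfrac{n-1}{2}z_i)\bigr|\le \tfrac{\mu}{2}\|z'_i\|^2+\tfrac{1}{2\mu}\bigl\|m\cdot\nabla z_i+\tfrac{n-1}{2}z_i\bigr\|^2$. Expanding the square, the cross term is rewritten via Gauss--Green as $\int_{\Omega_i}m_k\partial_k z_i\,z_i\,dx=-\tfrac{n}{2}\|z_i\|^2+\tfrac{1}{2}\int_\Gamma m\cdot n_i|z_i|^2\,d\sigma$, so the interior $\|z_i\|^2$ contributions combine with coefficient $\tfrac{(n-1)^2}{4}-\tfrac{n(n-1)}{2}<0$ and can be dropped; what survives is $R(x^0)^2\|\nabla z_i\|^2$ plus a boundary integral on $\Gamma$. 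Summing over $i$ and using $n_1=-n_2$ together with $m\cdot n_1\le 0$ on $\Gamma$ (this is exactly where star-shapedness enters) bounds $\sum_i\int_\Gamma m\cdot n_i|z_i|^2\,d\sigma$ by $\tfrac{R(x^0)}{h_0}\int_\Gamma h(z_1-z_2)^2\,d\sigma$. The choice $\mu=R(x^0)/\sqrt\alpha$ then aligns the three resulting terms with the three pieces of $E(t)$, giving the single coefficient $\max\bigl(\tfrac{R(x^0)}{\sqrt\alpha},\tfrac{(n-1)\sqrt\alpha}{2h_0}\bigr)$ at each time; the factor $2$ in \eqref{invineq1*} comes solely from the two endpoints $t=0,T$.
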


\begin{proof}

\noindent We take $q_k=m_k=x_k-x^0_k$, for $k=1,\ldots ,n$, in the identity \eqref{firstid}. Then, $\nabla q_k = \nabla m_k = e_k$, where $e_k$ is the canonical  basis element. In particular $\displaystyle \frac{\partial m_k}{\partial x_k} = 1$ and thus 
 $\displaystyle \sum_{k=1}^{n}\frac{\partial m_k}{\partial x_k} = n.$ Hence, we have
 
\begin{equation}\label{inv1''}
\begin{array}{l}
S=\displaystyle\sum_{i=1}^2\left.\left(z'_i,m_k\dfrac{\partial z_i}{\partial x_k}\right)_{\Omega_i}\right |_{0}^{T} +\dfrac{n}{2}\sum_{i=1}^{2}\int_{Q_i} \left(|z'_i|^2-A  \nabla z_i\nabla z_i\right)\;dx\;dt+\\
\\
\qquad\qquad\qquad+\displaystyle\sum_{i=1}^{2}\int_{Q_i} A  \nabla z_i\nabla z_i\;dx\;dt-\dfrac{1}{2}\sum_{i=1}^{2}\int_{Q_i} m_k
\sum_{l,j=1}^{n}\dfrac{\partial a_{lj}}{\partial x_k}\dfrac{\partial z_i}{\partial x_l}\dfrac{\partial z_i}{\partial x_j}\;dx\;dt.\\
\\
\qquad = S_1 + S_2+S_3+S_4.
\end{array}
\end{equation}
We want to estimate $S_1 + S_2+S_3+S_4$. 
Let us pose
\begin{equation}\label{defXi}
X_i=\left.\left(z'_i(t),m_k\dfrac{\partial z_i(t)}{\partial x_k}\right)_{\Omega_i}\right |_{0}^{T}
\end{equation}
 and
\begin{equation}\label{defYi}
Y_i=\int_{Q_i} \left(|z'_i|^2-A  \nabla z_i\nabla z_i\right)\,dx\,dt
\end{equation}
for $i=1,2$. Hence, $S_1 = X_1 + X_2$, $S_2 =\dfrac{n}{2}(Y_1 +Y_2)$ and therefore \eqref{inv1''} can be rewritten as
%
\begin{equation}\label{inv3}
\begin{array}{c}
 S= S_1 + S_2+S_3+S_4 =\displaystyle \left(X_1+X_2\right) +\dfrac{n-1}{2}\left(Y_1+Y_2\right)+\dfrac{1}{2}\sum_{i=1}^{2}\int_{Q_i} \left(|z'_i|^2+A  \nabla z_i\nabla z_i\right)\;dx\;dt\\
\\
\displaystyle-\dfrac{1}{2}\sum_{i=1}^{2}\int_{Q_i} m_k
\sum_{l,j=1}^{n}\dfrac{\partial a_{lj}}{\partial x_k}\dfrac{\partial z_i}{\partial x_l}\dfrac{\partial z_i}{\partial x_j}\;dx\;dt.
\end{array}
\end{equation}
Taking into account \eqref{energyz} and the conservation law \eqref{cons}, we get
\begin{equation}\label{inv4'}
\begin{array}{l}
 S_1 + S_2+S_3+S_4 =\displaystyle \left(X_1+X_2\right) +\dfrac{n-1}{2}\left(Y_1+Y_2\right)+E(0)T\\
\\
\qquad\qquad\qquad\qquad\displaystyle -\dfrac{1}{2}\int_{\Gamma_T}h  \left|z_{1}-z_{2}\right|^{2}d\sigma_{x}\,dt
\displaystyle-\dfrac{1}{2}\sum_{i=1}^{2}\int_{Q_i} m_k
\sum_{l,j=1}^{n}\dfrac{\partial a_{lj}}{\partial x_k}\dfrac{\partial z_i}{\partial x_l}\dfrac{\partial z_i}{\partial x_j}\;dx\;dt.
\end{array}
\end{equation}
By multiplying the PDEs in  \eqref{eq2.6h} by $z_1$ and $z_2$ respectively, and taking into account interface and boundary conditions, we get
\begin{equation*}
Y_1+Y_2=\sum_{i=1}^{2}\left.\left(z'_i(t),z_i(t)\right)_{\Omega_i}\right|_{0}^{T}+\int_{\Gamma_T}h  \left(z_1-z_2\right)^2\,d\sigma_x\,dt.
\end{equation*}
Hence \eqref{inv4'} becomes
\begin{equation}\label{inv4}
\begin{array}{c}
S_1 + S_2+S_3+S_4=\displaystyle Z_1+Z_2+E(0)T+\dfrac{n-2}{2}\int_{\Gamma_T}h  \left|z_{1}(t)-z_{2}(t)\right|^{2}d\sigma_{x}\,dt
\\
\\
\displaystyle\qquad\qquad\qquad-\dfrac{1}{2}\sum_{i=1}^{2}\int_{Q_i} m_k
\sum_{l,j=1}^{n}\dfrac{\partial a_{lj}}{\partial x_k}\dfrac{\partial z_i}{\partial x_l}\dfrac{\partial z_i}{\partial x_j}\;dx\;dt,
\end{array}
\end{equation}
where 
\begin{equation}\label{defZi}
Z_i=\left.\left(z'_i(t),m_k\dfrac{\partial z_i(t)}{\partial x_k}+\dfrac{n-1}{2}z_i(t)\right)_{\Omega_i}\right|_{0}^{T}, \quad \text{ for }i=1,2.
\end{equation}
Thus, we have an $E(0)T$ term. We need to see that it is a leading term. Thus, we need to  estimate the other terms in \eqref{inv4}. To this aim, let us fix $i\in\{1,2\}$. By Young inequality we get
\begin{equation}\label{inv5}
\begin{array}{ll}
\displaystyle \left|\left(z'_i(t),m_k\dfrac{\partial z_i(t)}{\partial x_k}+\dfrac{n-1}{2}z_i(t)\right)_{\Omega_i}\right|&\displaystyle\leq\int_{\Omega_i}\left|z_i'(t)\right|\left|m_k\dfrac{\partial z_i(t)}{\partial x_k}+\dfrac{n-1}{2}z_i(t)\right|\,dx\\
\\
&\displaystyle \leq \dfrac{\mu}{2}\int_{\Omega_i}\left|z_i'(t)\right|^2\,dx+\dfrac{1}{2\mu}\int_{\Omega_i}\left| m_k\dfrac{\partial z_i(t)}{\partial x_k}+\dfrac{n-1}{2}z_i(t)\right|^2\,dx.
\end{array}
\end{equation}
By applying Gauss-Green, it holds
\begin{equation*}
\int_{\Omega_i}m_k\dfrac{\partial z_i(t)}{\partial x_k}z_i(t)\,dx=\dfrac{1}{2}\int_{\Omega_i}m_k\dfrac{\partial}{\partial x_k}\left|z_i(t)\right|^2=-\dfrac{n}{2}\int_{\Omega_i}\left|z_i(t)\right|^2\,dx+\dfrac{1}{2}\int_{\Gamma}m_kn_{ik}\left|z_i(t)\right|^2\,d\sigma_x.
\end{equation*}
Hence, by \eqref{defRi}, the second term in the right hand side of \eqref{inv5} can be estimated as 
\begin{equation}\label{inv6}
\begin{array}{ll}
\displaystyle\int_{\Omega_i}\left| m_k\dfrac{\partial z_i(t)}{\partial x_k}+\dfrac{n-1}{2}z_i(t)\right|^2\,dx &=\displaystyle\int_{\Omega_i}\left| m_k\dfrac{\partial z_i(t)}{\partial x_k}\right|^2\,dx+\left[\dfrac{(n-1)^2}{4}-\dfrac{n(n-1)}{2}\right]\int_{\Omega_i}\left|z_i(t)\right|^2\,dx\\
\\
&\qquad\qquad\displaystyle +\dfrac{n-1}{2}\int_{\Gamma}m_kn_{ik}\left|z_i(t)\right|^2\,d\sigma_x\\
\\
&\displaystyle\leq (R_i(x^0))^2\int_{\Omega_i}\left|\nabla z_i(t)\right|^2\,dx+\dfrac{n-1}{2}\int_{\Gamma}m_kn_{ik}\left|z_i(t)\right|^2\,d\sigma_x,
\end{array}
\end{equation}
where, we have used the fact that $\dfrac{(n-1)^2}{4}-\dfrac{n(n-1)}{2} <0$. Let us note that by \eqref{defRi} 
\begin{equation}\label{RRi}
R(x^0)=R_1(x^0)>R_2(x^0),
\end{equation}
 since $x^0\in \Omega_2$,
thus, by putting \eqref{inv6} into \eqref{inv5} and taking into account \eqref{ipoA}, we obtain
\begin{equation}\label{inv7}
\begin{array}{ll}
\displaystyle \left|\left(z'_i(t),m_k\dfrac{\partial z_i(t)}{\partial x_k}+\dfrac{n-1}{2}z_i(t)\right)_{\Omega_i}\right|&\displaystyle\leq \dfrac{\mu}{2}\int_{\Omega_i}\left|z_i'(t)\right|^2\,dx+\dfrac{(R(x^0))^2}{2\alpha\mu}\int_{\Omega_i}A  \nabla z_i(t)\nabla z_i(t)\,dx\\
\\
&\displaystyle \qquad\qquad+\dfrac{n-1}{4\mu}\int_{\Gamma}m_kn_{ik}\left|z_i(t)\right|^2\,d\sigma_x.
\end{array}
\end{equation}
Let us consider the last term in \eqref{inv7}, for $i=1,2$. We observe that
$$
\left|\dfrac{1}{2}|z_1(t)|^2-|z_2(t)|^2\right|\leq \left|z_1(t)-z_2(t)\right|^2,\quad \forall \; t\in[0,T].
$$
Moreover, by our assumption on $x^0$ and since $n_1=-n_2$ on $\Gamma$, it holds that $m_k n_{1k}\leq 0$ on $\Gamma$. Hence,  we get
\begin{equation}\label{inv9}
\begin{array}{ll}
\displaystyle \sum_{i=1}^2\int_{\Gamma}m_kn_{ik}\left|z_i(t)\right|^2\,d\sigma_x&\displaystyle =\int_{\Gamma}m_kn_{1k}\left(\left|z_1(t)\right|^2-\left|z_2(t)\right|^2\right)\,d\sigma_x\\
\\
&\leq \displaystyle\int_{\Gamma}m_kn_{1k}\left(\dfrac{1}{2}\left|z_1(t)\right|^2-\left|z_2(t)\right|^2\right)\,d\sigma_x\\
\\
&\displaystyle \leq \int_{\Gamma}|m_kn_{1k}|\left|\dfrac{1}{2}\left|z_1(t)\right|^2-\left|z_2(t)\right|^2\right|\,d\sigma_x \\
\\
&\leq  \displaystyle \|m\|_{L^\infty(\Gamma)}\int_{\Gamma}\left|\dfrac{1}{2}\left|z_1(t)\right|^2-\left|z_2(t)\right|^2\right|\,d\sigma_x\\
\\
&\displaystyle\leq \dfrac{R(x^0)}{h_0}\int_{\Gamma}h  \left(z_1(t)-z_2(t)\right)^2\,d\sigma_x.
\end{array}
\end{equation}
Taking into account \eqref{inv7} and \eqref{inv9}, we get the estimate
\begin{equation}\label{inv10}
\begin{array}{ll}
\displaystyle \sum_{i=1}^2\left|\left(z'_i(t),m_k\dfrac{\partial z_i(t)}{\partial x_k}+\dfrac{n-1}{2}z_i(t)\right)_{\Omega_i}\right|&\leq
\displaystyle  \dfrac{\mu}{2}\sum_{i=1}^2\int_{\Omega_i}\left|z_i'(t)\right|^2\,dx\\
\\&\qquad\displaystyle+\dfrac{(R(x^0))^2}{2\alpha\mu}\sum_{i=1}^2 \int_{\Omega_i}A  \nabla z_i(t)\nabla z_i(t)\,dx\\
\\
&\qquad\displaystyle +\dfrac{(n-1)R(x^0)}{4\mu h_0}\int_{\Gamma}h  \left(z_1(t)-z_2(t)\right)^2\,d\sigma_x\\
\\ \displaystyle &\leq \max \left(\dfrac{R(x^0)}{\sqrt{\alpha}},\dfrac{(n-1)\sqrt{\alpha}}{2 h_0}\right)E(t).
\end{array}
\end{equation}
The last inequality follows by choosing $\mu=\dfrac{R(x^0)}{\sqrt{\alpha}}$ and by the definition of energy as in  \eqref{energyz}.
%
Hence by \eqref{cons} and taking into account \eqref{defZi}, we readily see that
\begin{equation}\label{inv11}
	\displaystyle\left|Z_1+Z_2\right|\leq 
	2\max \left(\dfrac{R(x^0)}{\sqrt{\alpha}},\dfrac{(n-1)\sqrt{\alpha}}{2 h_0}\right)E(0).
\end{equation}

The estimate of the last term in \eqref{inv4} is straight forward using the ellipticity and boundedness of the matrix A (see also \cite{LW}). More precisely, taking into account the energy  definition in \eqref{energyz}, we have
\begin{equation}\label{inv12}
\begin{array}{ll}
\displaystyle\left|\dfrac{1}{2}\sum_{i=1}^{2}\int_{Q_i} m_k
\sum_{l,j=1}^{n}\dfrac{\partial a_{lj}}{\partial x_k}\dfrac{\partial z_i}{\partial x_l}\dfrac{\partial z_i}{\partial x_j}\;dx\;dt\right| &\leq \displaystyle\sum_{i=1}^{n}\dfrac{n R_i(x^0)M}{2\alpha}\int_{Q_i}A\nabla z_i\nabla z_i\,dx\,dt
\\
\\
&\leq \dfrac{n R(x^0)M}{\alpha}TE(0).
\end{array}
\end{equation}
By putting \eqref{inv11} and \eqref{inv12} into \eqref{inv4}, we finally arrive at the lower bound
\begin{equation}\label{inv13}
\begin{array}{c}
S\geq \displaystyle -2\max \left(\dfrac{R(x^0)}{\sqrt{\alpha}}, \dfrac{(n-1)\sqrt{\alpha}}{2 h_0}\right)E(0)+E(0)T-\dfrac{n R(x^0)M}{\alpha}TE(0).
\end{array}
\end{equation}
The proof is now complete.
\end{proof}

\medskip

\noindent We now specify the required topological assumptions on the control regions $\omega_1$ and $\omega_2$ in order to obtain our exact controllability result. See Fig. 1, Fig. 2 and Fig. 3 for sample domains.

\begin{definition}\label{omega1}
Let $x^0$ be as in the hypotheses of Lemma \ref{invineq1}. We say that $\omega_1\subset \Omega_1$ is a neighbourhood of ${\partial\Omega(x^0)}$ if there exists some neighbourhood $\mathcal{O}\subset\mathbb{R}^n$ of $\partial{\Omega(x^0)}$ such that
$$
\omega_1=\Omega_1\cap
\mathcal{O}.
$$
\end{definition}
\begin{definition}\label{omega2}
We say that $\omega_2\subset \Omega_2$ is a neighborhood of ${\Gamma}$, if there exists some neighborhood $\mathcal{O}\subset\mathbb{R}^n$ of ${\Gamma}$ such that
$$
\omega_2=\Omega_2\cap
\mathcal{O}.
$$
\end{definition}
\noindent We will now establish a couple of important results  which are crucial to get the observability inequality given in Lemma \ref{invineq5**}  below. In this direction, we consider the function $\tau=(\tau_1, \ldots \tau_k) \in (C^1(\mathbb{R}^n))^n$ satisfying the following properties:
\begin{equation}\label{conh}
\left\{
\begin{array}{lll}
i) & \tau \cdot n_1 =1\; \text{on } \partial \Omega,\\
\\
ii) &\text{supp }\tau \subset\omega_1,\\
\\
iii) &\|\tau\|_{(L^\infty(\omega_1))^n}\leq 1.
\end{array}
\right.
\end{equation}
The existence of such a vectorial field is proved in \cite{ki27*}. 
\begin{lemma}\label{invineq2}
Let $\omega_1$ be a neighborhood of ${\partial\Omega(x^0)}$ and let $z=(z_1,z_2)$ the solution of problem \eqref{eq2.6h}-\eqref{regip}. Then, for any $T>0$, it holds
\begin{equation}\label{invineq2*}
\dfrac{1}{2}\left|\int_{\Sigma(x^0)} A  n_1\,n_1\left(\dfrac{\partial z_1}{\partial n_1}\right)^2\;d\sigma_x\;dt\right| \leq  2 \max \left(1, \dfrac{1}{\alpha}\right) E(0) + C \int_0^T \int_{\omega_1}\left(\left|z_1'\right|^2+\left|\nabla z_1\right|^2\right)\,dx\,dt.
\end{equation}
\end{lemma}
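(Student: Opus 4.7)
The plan is to apply the fundamental identity \eqref{firstid} with the vector field $q=\tau$ supplied by \eqref{conh}. The support condition $\operatorname{supp}\tau\subset\omega_1\subset\Omega_1$ is tailored so that $\tau$ vanishes in a neighborhood of $\Gamma$ and identically on $\Omega_2$; thus every surface integral over $\Gamma_T$ on the left-hand side of \eqref{firstid} disappears, as do all volume integrals on $Q_2$. What survives is the single external boundary term $\tfrac{1}{2}\int_\Sigma A n_1 n_1\bigl(\partial z_1/\partial n_1\bigr)^2\,\tau_k n_{1k}\,d\sigma_x\,dt$ on the left, matched on the right by the $\Omega_1$ time-boundary term $\bigl(z_1',\tau_k\partial_k z_1\bigr)_{\Omega_1}\big|_0^T$ together with three volume integrals on $Q_1$ whose integrands carry a factor of $\tau_k$, $\partial_k\tau_k$ or $\nabla\tau_k$, and hence are effectively supported in $\omega_1\times(0,T)$.

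Using $\tau\cdot n_1=1$ on $\partial\Omega(x^0)$ (with $\tau\equiv 0$ on the rest of $\partial\Omega$, by the support condition) and the nonnegativity of $A n_1 n_1(\partial z_1/\partial n_1)^2$ granted by \eqref{ipoA}, the surviving boundary integral is bounded below by $\tfrac{1}{2}\int_{\Sigma(x^0)} A n_1 n_1\bigl(\partial z_1/\partial n_1\bigr)^2\,d\sigma_x\,dt$, which is precisely the quantity to be estimated. To handle the time-boundary contribution, the natural route is to combine Cauchy--Schwarz, $\|\tau\|_{(L^\infty)^n}\leq 1$, and a weighted Young inequality:
\[
\Bigl|\bigl(z_1',\tau_k\partial_k z_1\bigr)_{\Omega_1}(t)\Bigr|\leq \int_{\Omega_1}|z_1'|\,|\nabla z_1|\,dx\leq \frac{1}{2\sqrt{\alpha}}\Bigl(\int_{\Omega_1}|z_1'|^2\,dx+\int_{\Omega_1}A\nabla z_1\,\nabla z_1\,dx\Bigr)\leq \frac{1}{\sqrt{\alpha}}\,E(t),
\]
and then invoke the conservation law \eqref{cons} at $t=0$ and $t=T$. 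Since $1/\sqrt{\alpha}\leq \max(1,1/\alpha)$ for every $\alpha>0$, the two endpoints together contribute at most $2\max(1,1/\alpha)\,E(0)$, which is the first summand on the right-hand side of \eqref{invineq2*}.

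The three residual volume integrals on $Q_1$ are then bounded directly, in a routine way: the $L^\infty$ bounds on $\tau$ and $\nabla\tau$, the boundedness of $A$ from \eqref{ipoA}, and the bound $M$ on $\partial a_{lj}/\partial x_k$ from \eqref{defm}, combined with an elementary Cauchy--Schwarz, yield for each of them a control by $C\int_0^T\!\int_{\omega_1}\bigl(|z_1'|^2+|\nabla z_1|^2\bigr)\,dx\,dt$, which is the second summand. The only delicate point of the argument is the opening step: one must be sure that the support of $\tau$ really does confine the $\Sigma$--integral to $\Sigma(x^0)$ (so that no contribution from $\Sigma_\ast(x^0)$ appears on the left) while simultaneously keeping $\tau$ away from $\Gamma$ (so that the interface integrals vanish). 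This is made rigorous by Definition \ref{omega1}, which places $\omega_1$ in a neighborhood of $\partial\Omega(x^0)$ disjoint from both $\partial\Omega_\ast(x^0)$ and $\Gamma$, so that the multiplier $\tau$ constructed in \cite{ki27*} achieves $\tau\cdot n_1=\chi_{\partial\Omega(x^0)}$ in the appropriate sense.
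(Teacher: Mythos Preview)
Your proposal is correct and follows essentially the same approach as the paper: apply identity \eqref{firstid} with $q=\tau$ from \eqref{conh}, use the support condition to kill all $\Gamma_T$ and $Q_2$ contributions, and then estimate the time-boundary term by the energy and the remaining volume terms by $C\int_0^T\!\int_{\omega_1}(|z_1'|^2+|\nabla z_1|^2)$. The only cosmetic difference is in the treatment of the endpoint term $\bigl(z_1',\tau_k\partial_k z_1\bigr)_{\Omega_1}\big|_0^T$: the paper uses the unweighted Young inequality $ab\le\tfrac12 a^2+\tfrac12 b^2$ and then bounds $\tfrac12\|z_1'\|^2+\tfrac12\|\nabla z_1\|^2\le\max(1,1/\alpha)E(t)$ directly, whereas you use a weighted Young inequality to reach $\tfrac{1}{\sqrt{\alpha}}E(t)$ first and then invoke $1/\sqrt{\alpha}\le\max(1,1/\alpha)$; both routes land on the same constant $2\max(1,1/\alpha)E(0)$.
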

\begin{proof} Taking $q_k=\tau_k$, $k=1, \ldots, n$,  in \eqref{firstid}, by \eqref{conh}i) and   \eqref{conh}ii) we get
\begin{equation}\label{firstidh}
\begin{array}{ll}
\displaystyle\dfrac{1}{2}\int_{\Sigma(x^0)} A  n_1  n_1\left(\dfrac{\partial z_1}{\partial n_1}\right)^2\;d\sigma_x\;dt&=\displaystyle\left.\left(z'_1,\tau_k\dfrac{\partial z_1}{\partial x_k}\right)_{\omega_1}\right |_{0}^{T} +\dfrac{1}{2}\int_0^T \int_{\omega_1} \left(|z'_1|^2-A  \nabla z_1\nabla z_1\right)\dfrac{\partial \tau_k}{\partial x_k}\;dx\;dt+\\
\\
&+\displaystyle\int_0^T \int_{\omega_1} A  \nabla z_1\nabla \tau_k\dfrac{\partial z_1}{\partial x_k}\;dx\;dt-\dfrac{1}{2}\int_0^T \int_{\omega_1} \tau_k
\sum_{l,j=1}^{n}\dfrac{\partial a_{lj}}{\partial x_k}\dfrac{\partial z_1}{\partial x_l}\dfrac{\partial z_1}{\partial x_j}\;dx\;dt.
\end{array}
\end{equation}
Passing to  the absolute value, by \eqref{energyz}, \eqref{conh}iii), Young inequality, the conservation law and since $\tau \in (C^1(\mathbb{R}^n))^n$ , we obtain
\begin{equation*}
\begin{array}{ll}
\displaystyle\dfrac{1}{2}\left|\int_{\Sigma(x^0)} A  n_1  n_1\left(\dfrac{\partial z_1}{\partial n_1}\right)^2\;d\sigma_x\;dt\right|\leq &=\displaystyle\dfrac{1}{2}\int_{\omega_1}\left|z_1'(0)\right|^2\,dx+\dfrac{1}{2}\int_{\omega_1}\left|z_1'(T)\right|^2\,dx+\dfrac{1}{2}\int_{\omega_1}\left|\nabla z_1(0)\right|^2\,dx\\
\\
&\qquad\displaystyle+\dfrac{1}{2}\int_{\omega_1}\left|\nabla z_1(T)\right|^2\,dx+C_1 \int_0^T \int_{\omega_1} \left(|z'_1|^2+A  \nabla z_1\nabla z_1\right)\,dx\,dt\\
\\
&\qquad+\displaystyle C_2 \int_0^T \int_{\omega_1} A  \nabla z_1\nabla z_1\;dx\;dt+C_3 \int_0^T \int_{\omega_1}\left|\nabla z_1\right|^2\;dx\;dt\leq\\
\\
&\leq \displaystyle   2 \max \left(1, \dfrac{1}{\alpha}\right) E(0)+C \int_0^T \int_{\omega_1} \left(|z'_1|^2+\left|\nabla z_1\right|^2\right) \;dx\;dt.
\end{array}
\end{equation*}
\end{proof}
We will get a similar result for the neighborhood $\omega_2$. To this aim, let $w\in C^1(\mathbb{R}^n)$ be such that
\begin{equation}\label{conh*}
\left\{
\begin{array}{lll}
i)&\text{supp }w\subset \omega_2, & 
\\ \\
ii)& 0\leq w \leq 1\; \text{in }\; \omega_2,\\
\\
iii)& w=1 \; \text{on }\; \Gamma, \\
\\
iv)& \|\nabla w\|_{L^\infty(\mathbb{R}^n)}\leq C.
\end{array}
\right.
\end{equation}
The existence of $w$ is quite standard, see for example, \cite{ki27*}. 
\\
Let us denote 
\begin{equation*}
\begin{array}{lll}
S_{\Gamma_T}&=\displaystyle\dfrac{1}{2}\sum_{i=1}^{2}\int_{\Gamma_T} A  n_i  n_i\left(\dfrac{\partial z_i}{\partial n_i}\right)^2 m_k n_{ik}\;d\sigma_x\;dt -\int_{\Gamma_T} h  \left(z_1-z_2\right)m_k\left(\nabla_\sigma (z_1-z_2)\right)_k\;d\sigma_x\;dt\\
\\
&\qquad\qquad \displaystyle +\dfrac{1}{2}\sum_{i=1}^{2}\int_{\Gamma_T} \left(|z'_i|^2-A  \nabla_\sigma z_i\nabla_\sigma z_i\right) m_k n_{ik}\;d\sigma_x\;dt. 
\end{array}
\end{equation*} 
we want to find an upper bound for $S_{\Gamma_T}$.
\begin{lemma}\label{invineq3}
Let $\omega_2$ be a neighborhood of ${\Gamma}$ and let $z=(z_1,z_2)$ the solution of problem \eqref{eq2.6h}-\eqref{regip}. Then, for any $T>0$, it holds
\begin{equation}\label{invineq3*}
\begin{array}{c}
\left|S_{\Gamma_T}\right| \displaystyle\leq  2\max\left(1,\dfrac{R^2(x^0)}{\alpha}\right)E(0) + C\int_0^T \int_{\omega_2}\left(\left|z_2'\right|^2+\left|\nabla z_2\right|^2\right)\,dx\,dt.
\end{array}
\end{equation}
\end{lemma}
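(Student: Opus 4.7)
The strategy mirrors that of Lemma \ref{invineq2}, but now with a multiplier localized around the interface $\Gamma$ rather than around $\partial\Omega(x^0)$. I would apply the identity \eqref{firstid} with the choice
\[q_k(x) = w(x)\, m_k(x), \qquad k = 1,\dots,n,\]
where $m$ is defined in \eqref{m} and $w \in C^1(\mathbb{R}^n)$ is the cut-off satisfying \eqref{conh*}. Since $\mathrm{supp}\,w \subset \omega_2 \subset \Omega_2$ and $\partial\Omega \cap \Gamma = \emptyset$ by \eqref{eq2.1}, the vector field $q$ vanishes in $\overline{\Omega}_1$ and on $\partial\Omega$; in particular $q \in (W^{1,\infty}(\Omega))^n$. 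Consequently the $\Sigma$-integral and every $Q_1$-term in \eqref{firstid} disappear. On $\Gamma$ itself one has $w \equiv 1$, so $q_k = m_k$ there and the four $\Gamma_T$-integrals on the left-hand side of \eqref{firstid} collapse to exactly $S_{\Gamma_T}$.

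What remains is an identity expressing $S_{\Gamma_T}$ as the sum of a bracket term $\bigl[(z_2', w\,m_k\,\partial_k z_2)_{\Omega_2}\bigr]_0^T$ and three $Q_2$-integrals whose integrands are supported in $\omega_2 \times (0,T)$ and are quadratic in $z_2'$ and $\nabla z_2$ with $L^\infty$-coefficients built from $w$, $\nabla w$, $m$, $a_{lj}$ and $\partial_k a_{lj}$. Using $\|w\|_\infty \le 1$, $|m| \le R(x^0)$, and the uniform bounds on $\nabla w$, $a_{lj}$, $\partial_k a_{lj}$ furnished by \eqref{ipoA} and \eqref{conh*}, I expect these three volume terms to be majorized directly by $C \int_0^T\!\!\int_{\omega_2}(|z_2'|^2 + |\nabla z_2|^2)\,dx\,dt$, producing the second summand on the right-hand side of \eqref{invineq3*}.

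For the bracket term I would enlarge the domain of integration from $\omega_2$ to $\Omega_2$, apply Young's inequality together with $|q_k| \le R(x^0)$ and the ellipticity bound $\alpha|\nabla z_2|^2 \le A\nabla z_2\cdot\nabla z_2$, and then invoke the conservation law \eqref{cons} (which yields $\int_{\Omega_2}|z_2'(t)|^2\,dx \le 2E(0)$ and $\int_{\Omega_2}|\nabla z_2(t)|^2\,dx \le 2E(0)/\alpha$) to obtain a bound of the form $C\,\max\bigl(1, R^2(x^0)/\alpha\bigr)\,E(0)$, which is precisely the first summand of \eqref{invineq3*}. The only subtle point I foresee is that one cannot preserve the $\omega_2$-localization for this bracket term at the endpoints $t=0,T$; trading localization for the global energy bound is what forces the first addend of \eqref{invineq3*} to be non-localized, and everything else is routine.
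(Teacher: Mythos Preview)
Your proposal is correct and follows essentially the same route as the paper: choose $q_k = w\,m_k$ in \eqref{firstid}, use $\mathrm{supp}\,w\subset\omega_2$ to kill the $\Sigma$- and $Q_1$-contributions, use $w\equiv 1$ on $\Gamma$ to recover $S_{\Gamma_T}$, then bound the remaining $\omega_2$-supported $Q_2$-integrals directly and the bracket term via Young's inequality, ellipticity, and the conservation law \eqref{cons}. Your write-up is in fact more explicit than the paper's terse argument; no gaps.
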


 \begin{proof} Let us choose in \eqref{firstid} $q_k=m_k\,w$, $k=1, \ldots, n$. By Young inequality, \eqref{energyz} and the conservation law, we obtain
 \begin{equation*}
\begin{array}{lll}
\left|S_{\Gamma_T}\right|&\displaystyle\leq\dfrac{1}{2}\int_{\omega_{2}}\left|z_2'(0)\right|^2\,dx+\dfrac{1}{2}\int_{\omega_2}\left|z_2'(T)\right|^2\,dx+\dfrac{R^2(x^0)}{2}\int_{\omega_2}\left|\nabla z_2(0)\right|^2\,dx\\
\\
&\qquad\qquad\displaystyle+\dfrac{R^2(x^0)}{2}\int_{\omega_{2}}\left|\nabla z_2(T)\right|^2\,dx+\dfrac{n}{2}\int_0^T \int_{\omega_2} \left(|z'_2|^2+A  \nabla z_2\nabla z_2\right)\;dx\;dt\\
\\ 
&\qquad\qquad\displaystyle +
C_0 \int_0^T \int_{\omega_2} \left(|z'_2|^2+A  \nabla z_2\nabla z_2\right)\;dx\;dt
 \\ \\
 &\qquad\qquad+\displaystyle C_1 \int_0^T \int_{\omega_2} A  \nabla z_2\nabla z_2\;dx\;dt+C_2 \int_0^T \int_{\omega_2}\left|\nabla z_2\right|^2\;dx\;dt\\
\\
&\leq \displaystyle 2\max\left(1,\dfrac{R^2(x^0)}{\alpha}\right)E(0)+C \int_0^T \int_{\omega_2} \left(|z'_2|^2+\left|\nabla z_2\right|^2\right) \;dx\;dt.
\end{array}
\end{equation*} 
 \end{proof}

\medskip
\noindent Collecting together the results of Lemma \ref{invineq1}, Lemma \ref{invineq2} and  Lemma \ref{invineq3}, we obtain the following lower estimate.
\begin{lemma}\label{invineq4}
Let us suppose that $\Omega_2$ is star-shaped with respect to a point $x^0\in \Omega_2$ satisfying
\begin{equation}\label{condr0}
R(x^0)<\frac{\alpha}{nM}.
\end{equation} Assume  $\omega_1$ and $\omega_2$ are neighbourhoods of ${\partial\Omega(x^0)}$ and $\Gamma$ respectively, and let $z=(z_1,z_2)$ the solution of problem \eqref{eq2.6h}-\eqref{regip}. Then, there exists $T_0>0$ such that
\begin{equation}\label{invineq4*}
 E(0)\leq  C_1(T)\int_0^T \int_{\omega_1}\left(\left|z_1'\right|^2+\left|\nabla z_1\right|^2\right)\,dx\,dt+ C_2(T)\int_0^T \int_{\omega_2}\left(\left|z_2'\right|^2+\left|\nabla z_2\right|^2\right)\,dx\,dt,
\end{equation}
for $T$ large enough so that
\begin{equation}\label{condT'}
\dfrac{T-T_0}{T}>\dfrac{n R(x^0)M}{ \alpha}.
\end{equation}

\end{lemma}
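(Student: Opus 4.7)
The plan is to combine the three previous lemmas by viewing $S$ (as in \eqref{inv1}) simultaneously from below (Lemma \ref{invineq1}) and from above (Lemmas \ref{invineq2} and \ref{invineq3}), obtaining inequalities of the form $[\alpha_1 T - \alpha_2]E(0)\le \alpha_3 E(0)+\text{control terms}$, and then absorbing the $E(0)$ constants on the right into a single number $T_0$.

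First I would decompose $S$ by splitting the $\Sigma$-integral according to the sign of $m_k n_{1k}$:
\begin{equation*}
\tfrac{1}{2}\int_{\Sigma}A\,n_1\,n_1\Bigl(\tfrac{\partial z_1}{\partial n_1}\Bigr)^{2}m_k n_{1k}\,d\sigma_x\,dt
=\tfrac{1}{2}\int_{\Sigma(x^0)}(\cdots)\,m_k n_{1k}\,d\sigma_x\,dt
+\tfrac{1}{2}\int_{\Sigma_\ast(x^0)}(\cdots)\,m_k n_{1k}\,d\sigma_x\,dt.
\end{equation*}
By the ellipticity in \eqref{ipoA} we have $A n_1 n_1\ge \alpha|n_1|^2>0$, so the integrand is non-negative on $\Sigma$ except for the factor $m_k n_{1k}$. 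Since $m_k n_{1k}\le 0$ on $\Sigma_\ast(x^0)$, the second piece is $\le 0$ and may be dropped when passing to an upper bound for $S$. On $\Sigma(x^0)$ we use $|m_k n_{1k}|\le R(x^0)$ together with Lemma \ref{invineq2}.

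Next, combining Lemma \ref{invineq1} with the above upper bound and with Lemma \ref{invineq3} applied to $S_{\Gamma_T}$, I obtain
\begin{equation*}
\Bigl[T\Bigl(1-\tfrac{n R(x^0)M}{\alpha}\Bigr)-K_1\Bigr]E(0)\le
\bigl(R(x^0)K_2+K_3\bigr)E(0)+C\!\!\int_{0}^{T}\!\!\int_{\omega_1}\!\!(|z_1'|^2+|\nabla z_1|^2)\,dx\,dt
+C\!\!\int_{0}^{T}\!\!\int_{\omega_2}\!\!(|z_2'|^2+|\nabla z_2|^2)\,dx\,dt,
\end{equation*}
where $K_1=2\max\bigl(R(x^0)/\sqrt{\alpha},(n-1)\sqrt{\alpha}/(2h_0)\bigr)$, $K_2=2\max(1,1/\alpha)$ and $K_3=2\max(1,R^2(x^0)/\alpha)$, all independent of $T$. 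Define
\begin{equation*}
T_0:=\frac{K_1+R(x^0)K_2+K_3}{1-\frac{nR(x^0)M}{\alpha}},
\end{equation*}
which is a \emph{positive} finite quantity precisely because of the smallness assumption \eqref{condr0} on $R(x^0)$. Then condition \eqref{condT'} is exactly the inequality that forces the bracket on the left to exceed the $E(0)$ coefficient on the right, so $E(0)$ can be absorbed and \eqref{invineq4*} follows with constants $C_1(T),C_2(T)$ depending continuously on $T$.

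The main obstacle is the bookkeeping in the upper-bound step: one must check that the integral over $\Sigma_\ast(x^0)$ really has a favourable sign (which relies on ellipticity of $A$ applied to the unit normal, not merely the gradient), and then carry the correct factor of $R(x^0)$ from $|m_k n_{1k}|$ through Lemma \ref{invineq2} so that the resulting $E(0)$-coefficients combine into a finite $T_0$ only when $R(x^0)<\alpha/(nM)$. Once this sign analysis is correctly performed, the rest is a direct chaining of the three earlier lemmas.
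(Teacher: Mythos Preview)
Your proposal is correct and follows the same route as the paper, which simply says it ``puts \eqref{invineq2*} and \eqref{invineq3*} into \eqref{invineq1*}''---your sign analysis on $\Sigma_\ast(x^0)$ and the bound $|m_k n_{1k}|\le R(x^0)$ on $\Sigma(x^0)$ are precisely what that step entails. One small remark: since \eqref{condT'} is equivalent to $T\bigl(1-nR(x^0)M/\alpha\bigr)>T_0$, the paper takes $T_0$ to be the sum of the $E(0)$-coefficients directly (your $K_1+R(x^0)K_2+K_3$, written there as a single $\max$), so the extra denominator $1-nR(x^0)M/\alpha$ in your definition of $T_0$ is unnecessary---your larger value still suffices, but it is not the ``exact'' threshold you claim.
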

\begin{proof}
By putting \eqref{invineq2*} and \eqref{invineq3*} into \eqref{invineq1*}, we get 
\begin{equation*}
\begin{array}{c}
\left[T\left(1-\dfrac{n R(x^0)M}{\alpha}\right)-2\max \left(\dfrac{R(x^0)}{\sqrt{\alpha}} ,\dfrac{(n-1)\sqrt{\alpha}}{2 h_0}\right)\right]E(0)\\\\\leq 2 \max\left(1, R(x^0), \dfrac{R(x^0)}{\alpha},\dfrac{R^2(x^0)}{\alpha}\right)E(0)+\\
\\
 \displaystyle +C_1\int_0^T \int_{\omega_1}\left(\left|z_1'\right|^2+\left|\nabla z_1\right|^2\right)\,dx\,dt+ C_2\int_0^T \int_{\omega_2}\left(\left|z_2'\right|^2+\left|\nabla z_2\right|^2\right)\,dx\,dt.
\end{array}
\end{equation*}
Denoting 
\begin{equation}\label{t0}
T_0=2\max \left(\dfrac{R(x^0)}{\sqrt{\alpha}}, \dfrac{(n-1)\sqrt{\alpha}}{2 h_0}\right)+2 \max\left(1, R(x^0), \dfrac{R(x^0)}{\alpha},\dfrac{R^2(x^0)}{\alpha}\right), 
\end{equation}
we obtain
\begin{equation*}
\begin{array}{ll}
\left[T\left(1-\dfrac{n R(x^0)M}{\alpha}\right)-T_0\right]E(0)
&\leq \displaystyle C_1\int_0^T \int_{\omega_1}\left(\left|z_1'\right|^2+\left|\nabla z_1\right|^2\right)\,dx\,dt\\
\\
 &\qquad\qquad+\displaystyle C_2\int_0^T \int_{\omega_2}\left(\left|z_2'\right|^2+\left|\nabla z_2\right|^2\right)\,dx\,dt.
\end{array}
\end{equation*}
Thus,  if \eqref{condr0} is satisfied and if $T$ is large enough so that \eqref{condT'} holds, $T\left(1-\dfrac{n R(x^0)M}{\alpha}\right)-T_0$ is positive and we get the result.
\end{proof}
\noindent Some comments are in order.
For sake of simplicity, all integrals in previous lemmas are written between $0$ and $T$. Actually they could, as well, have been written between $\varepsilon$ and $T-\varepsilon$ with $\varepsilon>0$ and sufficiently small. 
More precisely, by using \eqref{energyz} and the conservation law, the inequalities  \eqref{invineq1*}, \eqref{invineq2*} and \eqref{invineq3*} can be written as

\begin{equation*}
\begin{array}{ll}
\displaystyle\dfrac{1}{2}\int_\varepsilon^{T-\varepsilon}\int_{\partial\Omega} A  n_1  n_1\left(\dfrac{\partial z_1}{\partial n_1}\right)^2 m_k n_{1k}\;d\sigma_x\;dt
+\displaystyle\dfrac{1}{2}\sum_{i=1}^{2}\int_\varepsilon^{T-\varepsilon}\int_{\Gamma} A  n_i  n_i\left(\dfrac{\partial z_i}{\partial n_i}\right)^2 m_k n_{ik}\;d\sigma_x\;dt\\
\\
-\displaystyle\int_\varepsilon^{T-\varepsilon}\int_{\Gamma} h  \left(z_1-z_2\right)m_k\left(\nabla_\sigma (z_1-z_2)\right)_k\;d\sigma_x\;dt
+\displaystyle \dfrac{1}{2}\sum_{i=1}^{2}\int_\varepsilon^{T-\varepsilon}\int_{\Gamma} \left(|z'_i|^2-A  \nabla_\sigma z_i\nabla_\sigma z_i\right)m_k n_{ik}\;d\sigma_x\;dt\\
\\
\qquad\qquad\qquad\geq \left[(T-2\varepsilon)\left(1-\dfrac{n R(x^0)M}{2\alpha}\right)-2\max \left(\dfrac{R(x^0)}{\sqrt{\alpha}}, \dfrac{(n-1)\sqrt{\alpha}}{2 h_0}\right)\right]E(0),
\end{array}
\end{equation*}

\begin{equation*}
\begin{array}{ll}
\displaystyle\dfrac{1}{2}\left|\int_\varepsilon^{T-\varepsilon}\int_{\partial \Omega(x^0)} A  n_1  n_1\left(\dfrac{\partial z_1}{\partial n_1}\right)^2 m_k n_{1k}\;d\sigma_x\;dt\right| &\leq 2 \max\left(1,\dfrac{1}{\alpha}\right) E(0) + \\
\\
&\qquad+\displaystyle C_1\int_\varepsilon^{T-\varepsilon}\int_{\omega_1}\left(\left|z_1'\right|^2+\left|\nabla z_1\right|^2\right)\,dx\,dt,
\end{array}\end{equation*}
and
\begin{equation*}
\begin{array}{c}
 \displaystyle\dfrac{1}{2}\left|\sum_{i=1}^{2}\int_\varepsilon^{T-\varepsilon}\int_{\Gamma} A  n_i  n_i\left(\dfrac{\partial z_i}{\partial n_i}\right)^2 m_k n_{ik}\;d\sigma_x\;dt-\int_\varepsilon^{T-\varepsilon}\int_{\Gamma} h  \left(z_1-z_2\right)m_k\left(\nabla_\sigma (z_1-z_2)\right)_k\;d\sigma_x\;dt+\right.	\\
\\
 \displaystyle+\left. \dfrac{1}{2}\sum_{i=1}^{2}\int_\varepsilon^{T-\varepsilon}\int_{\Gamma} \left(|z'_i|^2-A  \nabla_\sigma z_i\nabla_\sigma z_i\right)m_k n_{ik}\;d\sigma_x\;dt\right| \leq\\
 \\
 \displaystyle\leq  2\max\left(1,\dfrac{R^2(x^0)}{\alpha}\right)E(0) + C_2\int_\varepsilon^{T-\varepsilon}\int_{\omega_2}\left(\left|z_2'\right|^2+\left|\nabla z_2\right|^2\right)\,dx\,dt,
\end{array}
\end{equation*}
respectively. 

By arguing as in Lemma \ref{invineq4}, if $\varepsilon$ is chosen to have $T-2\varepsilon$ large enough so that
\begin{equation*}
\dfrac{(T-2\varepsilon)-T_0}{(T-2\varepsilon)}>\dfrac{n R(x^0)M}{ \alpha}
\end{equation*} 
and if \eqref{condr0} is satisfied, we get
\begin{equation}\label{invineq4**}
 E(0)\leq  C_1(T)\int_\varepsilon^{T-\varepsilon}\int_{\omega_1}\left(\left|z_1'\right|^2+\left|\nabla z_1\right|^2\right)\,dx\,dt+ C_2(T)\int_\varepsilon^{T-\varepsilon}\int_{\omega_2}\left(\left|z_2'\right|^2+\left|\nabla z_2\right|^2\right)\,dx\,dt.
\end{equation}

Now, we can  prove the observability inequality which is crucial to establish the exact controllability result. 
\begin{lemma}[observability inequality]\label{invineq5**}
Let us suppose that $\Omega_2$ is star-shaped with respect to a point $x^0\in \Omega_2$ satisfying condition \eqref{condr0}. Assume that $\omega_1$ and $\omega_2$ are neighbourhoods of ${\partial\Omega(x^0)}$ and $\Gamma$ respectively and let $z=(z_1,z_2)$ be the solution of problem \eqref{eq2.6h}-\eqref{regip}. Then there exists $T_0>0$ such that
\begin{equation}\label{invineq5}
 E(0)\leq  C_1(T)\int_0^T \int_{\omega_1}\left(\left|z_1'\right|^2+\left|z_1\right|^2\right)\,dx\,dt+ C_2(T)\int_0^T \int_{\omega_2}\left(\left|z_2'\right|^2+\left|z_2\right|^2\right)\,dx\,dt,
\end{equation}
for $T$ large enough so that
\begin{equation}\label{condT}
\dfrac{T-T_0}{T}>\dfrac{R(x^0)M}{ \alpha}.
\end{equation}
\end{lemma}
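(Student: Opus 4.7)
\bigskip

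The goal is to promote the bound \eqref{invineq4**} of Lemma \ref{invineq4}, whose right-hand side carries $|\nabla z_i|^2$, to one carrying only $|z_i|^2$ and $|z_i'|^2$. My plan is a standard two-step scheme: a local cutoff-multiplier identity to exchange gradients for function values on a larger region (up to a lower-order surface term), followed by a compactness-uniqueness argument to absorb that remainder.

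\textit{Step 1 (shrinking the controls and applying \eqref{invineq4**}).} Pick open sets $\omega_i^\ast\subset\omega_i$ that are still neighbourhoods of $\partial\Omega(x^0)$ and $\Gamma$ in the sense of Definitions~\ref{omega1} and~\ref{omega2}, with $\overline{\omega_i^\ast}\subset\omega_i\cup\partial\Omega(x^0)$ for $i=1$ and $\overline{\omega_2^\ast}\subset\omega_2\cup\Gamma$. Applying \eqref{invineq4**} to $\omega_i^\ast$ in place of $\omega_i$ gives
\[
E(0)\leq C(T)\!\int_\varepsilon^{T-\varepsilon}\!\!\int_{\omega_1^\ast}\!(|z_1'|^2+|\nabla z_1|^2)\,dx\,dt+C(T)\!\int_\varepsilon^{T-\varepsilon}\!\!\int_{\omega_2^\ast}\!(|z_2'|^2+|\nabla z_2|^2)\,dx\,dt.
\]

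\textit{Step 2 (cutoff multiplier identity).} Choose $\eta_i\in C^{\infty}_c(\R^n)$ with $\eta_i\equiv 1$ on $\omega_i^\ast$ and $\hbox{supp}(\eta_i)\subset\overline{\omega_i}$, and a time cutoff $\xi\in C^\infty_c((0,T))$ with $\xi\equiv 1$ on $[\varepsilon,T-\varepsilon]$. Multiplying $Lz_i=0$ by $\eta_i^2\xi^2 z_i$ and integrating by parts on $Q_i$, the temporal boundary terms vanish since $\xi(0)=\xi(T)=0$. For $i=1$, since $\hbox{supp}(\eta_1)\cap\Gamma=\emptyset$ by \eqref{eq2.1} and $z_1=0$ on $\partial\Omega$, no spatial boundary term survives; a routine Young absorption using $A\geq\alpha I$ delivers
\[
\int_\varepsilon^{T-\varepsilon}\!\!\int_{\omega_1^\ast}|\nabla z_1|^2\,dx\,dt\;\leq\; C\int_0^T\!\!\int_{\omega_1}\!(|z_1|^2+|z_1'|^2)\,dx\,dt.
\]
For $i=2$ the interface condition $A\nabla z_2\cdot n_2=h(z_1-z_2)$ combined with $\eta_2\equiv 1$ on $\Gamma$ leaves the boundary contribution $\int_0^T\!\int_\Gamma\xi^2 h\,z_2(z_1-z_2)\,d\sigma_x\,dt$, which after a Young splitting is dominated by $C\mathcal{N}(z)$, where
\[
\mathcal{N}(z):=\int_0^T\!\int_\Gamma\!h\bigl(|z_1|^2+|z_1-z_2|^2\bigr)\,d\sigma_x\,dt.
\]
Combining with the previous inequality one obtains
\[
E(0)\leq C_1(T)\!\int_0^T\!\!\int_{\omega_1}\!(|z_1|^2+|z_1'|^2)\,dx\,dt+C_2(T)\!\int_0^T\!\!\int_{\omega_2}\!(|z_2|^2+|z_2'|^2)\,dx\,dt+C(T)\,\mathcal{N}(z).
\]

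\textit{Step 3 (compactness-uniqueness).} The term $\mathcal{N}(z)$ is a \emph{compact} perturbation of the energy: for any sequence $(z^n)$ of solutions bounded in $L^\infty(0,T;H_\Gamma)$ with $(z^n)'$ bounded in $L^\infty(0,T;L^2(\Omega_1)\times L^2(\Omega_2))$, Aubin--Lions combined with the compactness of the trace $H^1(\Omega_i)\hookrightarrow\hookrightarrow L^2(\Gamma)$ yields $z_i^n|_\Gamma\to z_i|_\Gamma$ strongly in $L^2(\Gamma_T)$, so $\mathcal{N}(z^n)\to\mathcal{N}(z)$. If \eqref{invineq5} failed, I would normalize to $E^n(0)=1$ with vanishing first two terms on the right; by \eqref{stima} one extracts a weak limit $z$, which solves \eqref{eq2.6h}--\eqref{regip} and vanishes on $(\omega_1\cup\omega_2)\times(0,T)$. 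A unique continuation principle for the transmission wave equation (valid when $T$ exceeds the characteristic propagation time, consistent with \eqref{condT}) then forces $z\equiv 0$, so $\mathcal{N}(z^n)\to 0$ and the displayed bound yields $E^n(0)\to 0$, contradicting $E^n(0)=1$.

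\textit{Main obstacle.} The delicate point is the unique continuation across the interface $\Gamma$: the standard Tataru--Hörmander theorem does not apply verbatim to coefficients discontinuous across $\Gamma$, so one must either invoke a transmission-adapted version or treat $\Omega_1$ and $\Omega_2$ separately, propagating vanishing across $\Gamma$ via the interface jump condition. The constant $T_0$ in \eqref{condT} ends up being the maximum of the one from \eqref{t0} and the critical time required by the unique continuation result; the condition on $R(x^0)M/\alpha$ is inherited from Lemma~\ref{invineq4}.
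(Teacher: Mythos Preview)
Your approach diverges from the paper's, and the divergence introduces a gap that the paper avoids altogether.

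The paper's argument is completely direct. It picks smaller neighbourhoods $\omega_{0i}\subset\omega_i$ and a \emph{single} cutoff $p(x,t)=\eta(t)\rho(x)$ with $\rho=1$ on $\omega_{01}\cup\omega_{02}$ and $\rho=0$ outside $\omega_1\cup\omega_2$. It then multiplies the equation for $z_i$ by $p z_i$, integrates by parts on $Q_i$, and \emph{adds the two identities}. Because the same $p$ is used on both sides of $\Gamma$, the two interface contributions combine via the jump condition to give
\[
-\int_{\Gamma_T}h\,p\,(z_1-z_2)^2\,d\sigma_x\,dt\;\le\;0,
\]
which is simply dropped. After a Young absorption (using $|\nabla p|^2/p\in L^\infty$) one obtains $\int_{\omega_{0i}}p|\nabla z_i|^2\le C\int_{\omega_i}(|z_i|^2+|z_i'|^2)$, and \eqref{invineq5*} finishes the proof. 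No compactness--uniqueness, no unique continuation.

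By contrast, you use \emph{two different} cutoffs $\eta_1,\eta_2$ and insist that $\operatorname{supp}\eta_1\cap\Gamma=\emptyset$. This kills the $\Gamma$-term for $i=1$ but leaves the one for $i=2$ isolated, namely $-\int_{\Gamma_T}\xi^2 h(z_1-z_2)z_2$, which has no sign. You are then forced to carry the surface remainder $\mathcal N(z)$ and absorb it by a compactness--uniqueness argument. That argument hinges on a unique continuation property for the transmission wave system across $\Gamma$, which you yourself flag as the ``main obstacle'' and do not establish. For coefficients merely in $W^{1,\infty}$ and a jump interface this is a nontrivial result (Tataru-type theorems need to be adapted to the transmission setting and to the precise geometry of $\omega_1$, $\omega_2$); without it your Step~3 is incomplete. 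Even if such a result is available, you lose the explicit formula \eqref{t0} for $T_0$, replacing it by an unspecified ``critical time required by the unique continuation result''.

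In short: the missing idea in your write-up is to use one common cutoff and sum the two identities so that the interface contributions recombine into a sign-definite term. Doing so eliminates $\mathcal N(z)$ and the need for Step~3 entirely.
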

\begin{proof} In view of \eqref{invineq4*}, we need to estimate $\nabla z_i$ in terms of $z_i$ and $z_i'$,  $i=1,2$.
Let $\omega_{01}\subset\Omega_1$ be a neighborhood of $\partial\Omega(x^0)$ and $\omega_{02}\subset\Omega_2$ be a neighborhood of $\Gamma$ such that
$$
\Omega\cap\omega_{0i}\subset\omega_i,\, i=1,2.
$$

\noindent Note that \eqref{invineq4**} is true for any neighborhood of $\partial\Omega(x^0)$ and $\Gamma$, then it is also true for $\omega_{0i}$, $i=1,2$ and we obtain
\begin{equation}\label{invineq5*}
 E(0)\leq  C'_1(T)\int_\varepsilon^{T-\varepsilon}\int_{\omega_{01}}\left(\left|z_1'\right|^2+\left|\nabla z_1\right|^2\right)\,dx\,dt+ C'_2(T)\int_\varepsilon^{T-\varepsilon}\int_{\omega_{02}}\left(\left|z_2'\right|^2+\left|\nabla z_2\right|^2\right)\,dx\,dt.
\end{equation}
Let us consider $\rho\in W^{1,\infty}(\Omega)$, $\rho\geq0$ such that
\begin{equation*}\label{proprho}
\left\{
\begin{array}{lll}
i)& \rho(x)=1\; \text{in }(\omega_{01}\cup \omega_{02}),\\
\\
ii)&\rho(x)=0\; \text{in }\Omega\setminus (\omega_{1}\cup \omega_{2}).
\end{array}
\right.
\end{equation*}
Define the function $p(x,t)=\eta(t)\rho(x)$ in $\Omega\times (0,T)$, where  $\eta(t)\in C^1([0,T])$ is such that $\eta(0)=\eta(T)=0$ and $\eta(t)=1$ in $(\varepsilon, T-\varepsilon)$. Thus, $p$ satisfies
\begin{equation}\label{propp}
\left\{
\begin{array}{lll}
i)&p(x,t)=1\; \text{in }(\omega_{01}\cup \omega_{02})\times (\varepsilon, T-\varepsilon),\\
\\
ii)&p(x,t)=0\;\text{in }(\Omega\setminus (\omega_{1}\cup \omega_{2}))\times (\varepsilon, T-\varepsilon),\\
\\
iii)&p(x,0)=p(x,T)=0 \; \text{in }\Omega,\\
\\
iv)&\dfrac{|\nabla p|^2}{p}\in L^\infty(\Omega\times (0,T)).
\end{array}
\right.
\end{equation}
Multiplying the equation for $z_1$ in \eqref{eq2.6h} by $pz_1$ and integrating by parts in $Q_1$, we obtain
\begin{equation}\label{fid1*}
\begin{array}{c}
\displaystyle \int_0^T\langle z_{1}'', pz_1 \rangle_{V', V}\;dt +  \int_{\omega_1\times (0,T)}A \nabla z_{1}  \nabla \left(pz_1\right)\; dx\;dt
-\int_{\Gamma_T}A\nabla z_1pz_1  n_1\; d\sigma_x\;dt=0
\end{array}
\end{equation}
using \eqref{propp}$\rm{ii)}$ and the fact that $z_1 =0$ on $\Sigma$.
One more integration by parts of the first term leads to
\begin{equation*}
\begin{array}{c}
 \displaystyle- \int_{0}^T\int_{\omega_{1}}z'_1p'z_1\; dx\;dt- \int_{0}^T\int_{\omega_{1}}|z'_1|^2p\; dx\;dt+\int_{0}^T\int_{\omega_{1}}A \nabla z_{1}  \nabla pz_1\; dx\;dt+\\
 \\
 \displaystyle\int_{0}^T\int_{\omega_{1}}A \nabla z_{1} \nabla z_1 p\; dx\;dt-\int_{\Gamma_T}A\nabla z_1pz_1  n_1\; d\sigma_x\;dt=0.
\end{array}
\end{equation*}
Arguing as above, we get a similar identity for $z_2$.
Now, summing up and using the imperfect interface condition, we get
\begin{equation}\label{fid2}
\begin{array}{c}
  \displaystyle\int_{0}^T\int_{\omega_{1}}A \nabla z_{1} \nabla z_1 p\; dx\;dt+ \displaystyle\int_{0}^T\int_{\omega_{2}}A \nabla z_{2} \nabla z_2 p\; dx\;dt=\\
  \\
 \displaystyle\int_{0}^T\int_{\omega_{1}}|z'_1|^2p\; dx\;dt+ \int_{0}^T\int_{\omega_{2}}|z'_2|^2p\; dx\;dt+ \displaystyle \int_{0}^T\int_{\omega_{1}}z'_1p'z_1\; dx\;dt+\int_{0}^T\int_{\omega_{2}}z'_2p'z_2\; dx\;dt\\
  \\
 \displaystyle -\int_{0}^T\int_{\omega_{1}}A \nabla z_{1}  \nabla p z_1\; dx\;dt -\int_{0}^T\int_{\omega_{2}}A \nabla z_{2}  \nabla p z_2\; dx\;dt -\int_{\Gamma_T}h p(z_1-z_2)^2\; d\sigma_x\;dt.
\end{array}
\end{equation}
We now estimate the terms on the right hand side of the above expression. To this aim let us fix $i\in \{1,2\}$. We have 
\begin{equation}
 \displaystyle\int_{0}^T\int_{\omega_{i}}|z'_i|^2p\; dx\;dt 
 \leq  \displaystyle\|p\|_{L^\infty(0,T; \Omega)}\int_{0}^T\int_{\omega_{i}}|z'_i|^2 \; dx\;dt
\end{equation}
\noindent and by Young inequality
\begin{equation}
\displaystyle \int_{0}^T\int_{\omega_{i}}z'_ip'z_i\; dx\;dt\leq \frac{1}{2}\displaystyle\|p'\|_{L^\infty(0,T; \Omega)}\left(\int_{0}^T\int_{\omega_{i}}|z_i|^2 \; dx\;dt+\int_{0}^T\int_{\omega_{i}}|z'_i|^2 \; dx\;dt\right).
\end{equation}

\noindent To estimate the other two terms, we apply again Young inequality and hypothesis \eqref{ipoA}ii) to get
\begin{equation}\label{fid4}
\begin{array}{ll}
 \displaystyle \left|\int_{0}^T\int_{\omega_{i}}A \nabla z_{i}  \nabla p z_i\; dx\;dt\right|& \displaystyle\leq \left|\int_{0}^T\int_{\omega_{i}}\beta |\nabla z_{i}||\nabla p||z_i| \;dx\;dt\right| \\
\\ 
& \displaystyle \leq \beta^2\gamma \int_{0}^T\int_{\omega_{i}} p|\nabla z_{i}|^2\; dx\;dt
 \displaystyle+\dfrac{1}{4\gamma}\int_{0}^T\int_{\omega_{i}}\dfrac{|\nabla p|^2}{p}| z_i|^2\; dx\;dt,
\end{array}
\end{equation}
for any $\gamma >0$.
%
By putting the above estimates  into \eqref{fid2} and taking into account \eqref{propp}, we obtain
\begin{equation}\label{fid5}
\begin{array}{ll}
\displaystyle \alpha \left( \int_{0}^{T}\int_{\omega_{1}}|\nabla z_{1}|^2 p\; dx\;dt+\int_{0}^{T}\int_{\omega_{2}}|\nabla z_{2}|^2 p\; dx\;dt
  \right)
  \leq C\left(\int_{0}^T\int_{\omega_{1}}|z'_1|^2\; dx\;dt
 + \int_{0}^T\int_{\omega_{2}}|z'_2|^2\; dx\;dt\right) \\
 \\
 

 \displaystyle \hspace{5cm}+\beta^2\gamma \left(\int_{0}^T\int_{\omega_{1}} |\nabla z_{1}|^2 p \; dx\;dt+ \int_{0}^T\int_{\omega_{2}} |\nabla z_{2}|^2 p\; dx\;dt\right),
\end{array}
\end{equation}
for some constant $C>0$ and for any $\gamma >0$. Thus, choosing $\gamma<\dfrac{\alpha}{\beta^2}$
and by  \eqref{invineq5*} and  \eqref{propp}, we get the desired result.
\end{proof}
\begin{corollary}[equivalence of norms]\label{equivalence}
Let us suppose that $\Omega_2$ is star-shaped with respect to a point $x^0\in \Omega_2$ satisfying condition \eqref{condr0}. Assume that $\omega_1$ and $\omega_2$ are neighbourhoods of ${\partial\Omega(x^0)}$ and $\Gamma$ respectively and let $z=(z_1,z_2)$ the solution of problem \eqref{eq2.6h}-\eqref{regip}. Then, there exists $T_0>0$ such that
\begin{equation}\label{invineq5'}
 E(0)\leq  C(T)\left(\int_0^T \int_{\omega_1}\left(\left|z_1'\right|^2+\left|z_1\right|^2\right)\,dx\,dt+ \int_0^T \int_{\omega_2}\left(\left|z_2'\right|^2+\left|z_2\right|^2\right)\,dx\,dt\right) \leq C_3(T) E(0),
\end{equation}
for $T$ large enough so that \eqref{condT} is satisfied.
\end{corollary}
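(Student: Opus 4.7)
The left inequality in \eqref{invineq5'} is precisely the observability inequality \eqref{invineq5} established in Lemma \ref{invineq5**}, so nothing new is required there. The remaining work is to prove the upper bound, namely that the space--time integrals of $|z_i|^2+|z_i'|^2$ over $\omega_i\times(0,T)$ are controlled by $E(0)$.

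The idea is to estimate each of the four pieces pointwise in $t$ by $E(t)$, and then use the conservation law \eqref{cons} to integrate in time. For the time derivative terms we simply observe
\begin{equation*}
\int_{\omega_i}|z_i'(t)|^2\,dx\leq \int_{\Omega_i}|z_i'(t)|^2\,dx\leq 2E(t)=2E(0),\qquad i=1,2,
\end{equation*}
directly from the definition \eqref{energyz} of $E(t)$. For $z_1$ the corresponding $L^2$ bound is immediate from Poincar\'e's inequality (since $z_1=0$ on $\partial\Omega$ in the sense of the density definition of $V$) combined with \eqref{ipoA}(ii):
\begin{equation*}
\int_{\omega_1}|z_1(t)|^2\,dx\leq C_P\int_{\Omega_1}|\nabla z_1(t)|^2\,dx\leq \frac{C_P}{\alpha}\int_{\Omega_1}A\nabla z_1(t)\nabla z_1(t)\,dx\leq \frac{2C_P}{\alpha}E(0).
\end{equation*}

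The only subtlety is bounding $\|z_2(t)\|_{L^2(\Omega_2)}$, because on $\Omega_2$ there is no Dirichlet boundary condition and hence no classical Poincar\'e inequality for $z_2$ alone. Here I would exploit the fact, recalled right after the definition \eqref{heg}, that the norm
$$\|v\|_{H_\Gamma}^2=\|\nabla v_1\|_{\luno}^2+\|\nabla v_2\|_{\ldue}^2+\|v_1-v_2\|_{L^2(\Gamma)}^2$$
is equivalent to the full $V\times H^1(\Omega_2)$ norm. Consequently there exists a constant $C$, depending only on $\Omega_1$, $\Omega_2$ and $\Gamma$, with
\begin{equation*}
\|z_2(t)\|_{L^2(\Omega_2)}^2\leq C\|z(t)\|_{H_\Gamma}^2
\leq C\bigl(\|\nabla z_1(t)\|_{\luno}^2+\|\nabla z_2(t)\|_{\ldue}^2+\|z_1(t)-z_2(t)\|_{L^2(\Gamma)}^2\bigr).
\end{equation*}
Using \eqref{ipoA}(ii) on the gradient terms and \eqref{ipoh} on the jump term, each piece on the right is controlled by a constant multiple of $E(t)=E(0)$, yielding $\|z_2(t)\|_{L^2(\Omega_2)}^2\leq C E(0)$.

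Putting these four bounds together and integrating over $[0,T]$ gives
\begin{equation*}
\int_0^T\!\!\int_{\omega_1}(|z_1'|^2+|z_1|^2)\,dx\,dt+\int_0^T\!\!\int_{\omega_2}(|z_2'|^2+|z_2|^2)\,dx\,dt\leq C_3(T)\,E(0),
\end{equation*}
with $C_3(T)$ of order $T$, which proves the right inequality and hence the corollary. I do not foresee any real obstacle: the main point, which is the equivalence of the $H_\Gamma$-norm with the product norm allowing one to recover an $L^2(\Omega_2)$ bound on $z_2$ without a boundary condition, is already granted by the functional framework set up in Section 2.
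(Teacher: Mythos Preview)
Your proof is correct and follows essentially the same route as the paper. The paper's own proof is a single line citing \eqref{stima}, \eqref{energyz}, \eqref{cons} and \eqref{invineq5}; you have simply unpacked that line, and the only cosmetic difference is that where the paper invokes the a~priori estimate \eqref{stima} to bound $\|z(t)\|_{H_\Gamma}$ and $\|z'(t)\|_{L^2}$ in terms of the data, you obtain the same bounds directly from the conservation law \eqref{cons} together with the $H_\Gamma$--norm equivalence recalled after \eqref{heg}. Both approaches rest on the same key point you identified, namely that control of $\|z_2(t)\|_{L^2(\Omega_2)}$ comes from the equivalence of the $H_\Gamma$ norm with the full $V\times H^1(\Omega_2)$ norm.
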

\begin{proof} The proof is an immediate consequence of   \eqref{stima}, \eqref{energyz}, \eqref{cons} and \eqref{invineq5}. \end{proof}
\noindent The above lemma essentially shows the equivalence of the standard norm in $H_\Gamma$ with the norm

\[\left(\int_0^T \int_{\omega_1}\left(\left|z_1'\right|^2+\left|z_1\right|^2\right)\,dx\,dt+ \int_0^T \int_{\omega_2}\left(\left|z_2'\right|^2+\left|z_2\right|^2\right)\,dx\,dt\right)^{1/2} .
\]
It also proves the following uniqueness result:
if $z_i =0$ in $\omega_i\times (0,T)$, then $z_i= 0$ in $\Omega_i\times (0,T)$, for $i=1,2$.

These are the main points to develop the HUM method described in the next section.

\section{HUM and the internal exact controllability result }\label{secHUM}
In this section, by using the Hilbert Uniqueness Method introduced by Lions (see \cite{ki27, ki27*}), we prove the internal exact controllability of system (\ref{eq2.6}) stated in the following theorem. 
\begin{theorem}\label{mainteo}
Assume that $\eqref{ipoA}$ and $\eqref{ipoh}$ hold. Suppose that $\Omega_2$ is star-shaped with respect to a point $x^0\in \Omega_2$ satisfying $R(x^0)<\alpha/(nM)$. Let $\omega_1$ and $\omega_2$ be neighbourhoods of ${\partial\Omega(x^0)}$ and $\Gamma$, respectively. Then, for any given $\left(U^{0},U^{1}\right)$ in $\left(L^{2}\left(\Omega_{1}\right)\times L^{2}\left(\Omega_{2}\right)\right) \times (H_\Gamma)'$, there exist a control $\zeta \in W'$  and a time $T_0>0$ such that the corresponding solution of problem \eqref{eq2.6} satisfies
\begin{equation}\label{excont}
u(T)=u'(T)=0,\end{equation}
for $T$ large enough so that
\begin{equation}\label{T2}
\dfrac{T-T_0}{T}>\dfrac{n R(x^0)M}{ \alpha}.
\end{equation}
\end{theorem}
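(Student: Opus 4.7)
The plan is to apply the Hilbert Uniqueness Method (HUM) of Lions. By linearity it is enough to drive every $(U^0,U^1)\in (L^2(\Omega_1)\times L^2(\Omega_2))\times (H_\Gamma)'$ to rest at time $T$, so I would look for a control $\zeta\in W'$ producing $u(T)=u'(T)=0$. For any $(\phi^0,\phi^1)\in H_\Gamma\times (L^2(\Omega_1)\times L^2(\Omega_2))$, let $\phi=(\phi_1,\phi_2)$ be the solution of the homogeneous problem (\ref{eq2.6h}) with these initial data; by Theorem \ref{3.1} it exists, lies in $C([0,T];H_\Gamma)\cap C^1([0,T];L^2(\Omega_1)\times L^2(\Omega_2))$ and has constant energy $E(0)$. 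Guided by the structure of the observability estimate (\ref{invineq5}), I would prescribe the control $\zeta=(\zeta_1,\zeta_2)\in W'$ through the dual action
\begin{equation*}
\langle \zeta\chi_\omega,v\rangle_{W',W}=\sum_{i=1}^2\int_0^T\!\!\int_{\omega_i}\bigl(\phi_i'\,v_i'+\phi_i\,v_i\bigr)\,dx\,dt,\qquad v=(v_1,v_2)\in W,
\end{equation*}
which is continuous on $W$ with $\|\zeta\|_{W'}\le C\,\|(\phi^0,\phi^1)\|_{H_\Gamma\times (L^2\times L^2)}$.

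With this $\zeta$, let $\psi=(\psi_1,\psi_2)$ be the transposition solution (Definition \ref{deftraspeps}) of the backward version of (\ref{eq2.6}) with vanishing terminal data $\psi(T)=\psi'(T)=0$; by (\ref{stimatrasp1}) one has $\psi\in C([0,T];L^2\times L^2)\cap C^1([0,T];(H_\Gamma)')$. I would define the HUM operator
\begin{equation*}
\Lambda:\,H_\Gamma\times \bigl(L^2\times L^2\bigr)\longrightarrow (H_\Gamma)'\times \bigl(L^2\times L^2\bigr),\qquad \Lambda(\phi^0,\phi^1)=\bigl(-\psi'(0),\,\psi(0)\bigr).
\end{equation*}
Formally multiplying the backward equation $L\psi_i=\zeta_i\chi_{\omega_i}$ by $\phi_i$, integrating on $Q_i$ and summing over $i=1,2$, the $\Sigma$-terms drop out by the Dirichlet condition, while the conormal traces $A\nabla\psi_i\cdot n_i$ paired with the equation $L\phi_i=0$ produce the symmetric jump contribution $\int_{\Gamma_T}h(\phi_1-\phi_2)(\psi_1-\psi_2)\,d\sigma_x\,dt$, which cancels between the two sides. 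Using the vanishing of $\psi$ at $t=T$ together with the definition of $\zeta$ yields the key identity
\begin{equation*}
\bigl\langle \Lambda(\phi^0,\phi^1),(\phi^0,\phi^1)\bigr\rangle=\sum_{i=1}^2\int_0^T\!\!\int_{\omega_i}\bigl(|\phi_i'|^2+|\phi_i|^2\bigr)\,dx\,dt.
\end{equation*}
By Corollary \ref{equivalence}, as soon as $T$ satisfies (\ref{T2}) this right-hand side is equivalent to $E(0)$, hence to $\|(\phi^0,\phi^1)\|^2_{H_\Gamma\times (L^2\times L^2)}$. Thus $\Lambda$ is continuous, symmetric and coercive, and Lax--Milgram yields that it is an isomorphism. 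The unique $(\phi^0,\phi^1)$ satisfying $\Lambda(\phi^0,\phi^1)=(U^1,-U^0)$ then produces, via the construction above, a control $\zeta\in W'$ whose associated $\psi$ fulfils $\psi(0)=U^0$ and $\psi'(0)=U^1$; by uniqueness in the transposition sense, $\psi$ coincides with the solution $u$ of (\ref{eq2.6}) with data $(U^0,U^1)$ and control $\zeta$, so that $u(T)=u'(T)=0$.

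The main obstacle will be to justify the integration-by-parts identity above rigorously in the weak framework, since $\psi$ only lies in $C([0,T];L^2\times L^2)\cap C^1([0,T];(H_\Gamma)')$, so that neither $A\nabla\psi_i\cdot n_i$ on $\Gamma$ nor the evaluations $\psi'(0)$ in $L^2$ make classical sense. I would handle this by first performing the computation for strongly regular data, for instance $\phi^0\in (H^2(\Omega_1)\cap V)\times H^2(\Omega_2)$ and $\phi^1\in V\times H^1(\Omega_2)$, where all traces and conormal derivatives are classical and the cancellation of the interface terms produced by the imperfect condition $A\nabla\psi_1\cdot n_1=-h(\psi_1-\psi_2)$ is transparent. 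The general case would then follow by a density argument, relying on continuous dependence of $\phi$ on its initial data through (\ref{stima}) and of $\psi$ on the control through (\ref{stimatrasp1}).
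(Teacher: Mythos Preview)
Your proposal is correct and follows the same HUM construction as the paper: build the control from the homogeneous solution as $\zeta=(-z''+z)\chi_\omega$ (which is exactly your dual pairing formula via \eqref{duality}), solve the backward problem with zero terminal data, form $\Lambda$ from its Cauchy data at $t=0$, derive the key identity, and invoke Corollary \ref{equivalence} to conclude that $\Lambda$ is an isomorphism. The only differences are cosmetic: the paper reads the identity \eqref{ef*eps} directly off the transposition Definition \ref{deftraspeps} rather than via a density argument, and with your sign convention $\Lambda=(-\psi'(0),\psi(0))$ you must solve $\Lambda(\phi^0,\phi^1)=(-U^1,U^0)$ rather than $(U^1,-U^0)$ to actually obtain $\psi(0)=U^0$, $\psi'(0)=U^1$.
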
 

\noindent We point out that the exact controllability is achieved in the space $\left(L^{2}\left(\Omega_{1}\right)\times L^{2}\left(\Omega_{2}\right)\right) \times (H_\Gamma)'$ with control in $W'$. In fact, we represent the control $\zeta$ in terms of the solution $z$ of problem \eqref{eq2.6h}-\eqref{regip} with appropriate chosen initial data. The method is constructive, indeed one could develop it as a numerical algorithm. We briefly describe the HUM which essentially relies on the observability estimate, given in Lemma \ref{invineq5**}.\\
Given any $\left(z^{0},z^{1}\right)\in    H_\Gamma \times  \left(L^{2}\left(\Omega_{1}\right)\times L^{2}\left(\Omega_{2}\right)\right)$, let $z$ the solution of problem \eqref{eq2.6h}-\eqref{regip}. Then consider the following adjoint problem
\begin{equation}\label{probtheta}
\left\{
\begin{array}{@{}ll}
L\theta_i \equiv \theta_i'' - \hbox {div} (A(x)\nabla \theta_{i} )=(-z''_i + z_i) \chi_{\omega_i}& \hbox {in } Q_i,\; \hbox{ for } \; i=1,2\\[3pt]
A(x) \nabla \theta_{1}   \nuno =
   -A(x) \nabla \theta_{2}   \ndue & \hbox {on }
\Gamma_T, \\[3pt]
A(x) \nabla \theta_{1}   \nuno
   = -h(x)(\theta_{1}-\theta_{2}) & \hbox {on }
\Gamma_T, \\[3pt]
\theta_{1} = 0 & \hbox {on } \Sigma,\\[3pt]
\theta_{i}(T)=\theta'_{i}(T)
= 0 & \hbox {in }  \Omega_{i} \; \hbox{ for } \; i=1,2,

\end{array}
\right.
\end{equation}
where the solution $\theta=(\theta_1,\theta_2)$ is intended in the sense of transposition. Here $-z''_i \chi_{\omega_i}$, for $i=1,2$ is to be interpreted in a duality sense, namely
\begin{equation}\label{duality}
\langle -z''_i \chi_{\omega_i}, v_i \rangle_{W', W}= \displaystyle\int_{0}^T\int_{\omega_{i}}z'_i  v'_i \; dx\;dt,
\end{equation}
for all $v=(v_1,v_2) \in W$. \\
 Now, if $(U^0, U^1)$ are the initial conditions of problem \eqref{eq2.6} with $\zeta_i=-z_i''+z_i$, then,  by uniqueness, the null controllability problem is solved if $\theta$ satisfies
\begin{equation}\label{citeta}\theta_i(0) = U_i^0, \;\;  \theta'_i(0) = U_i^1
\end{equation}
for $i=1,2$. Thus, the key point is to choose the initial data  $\left(z^{0},z^{1}\right)\in    H_\Gamma \times  L^{2}\left(\Omega) \right) $ so that the above initial conditions for $\theta$ are satisfied. This motivates us to define the linear operator 
\begin{equation}\label{Le}
\Lambda :H_{\Gamma}  \times  \left(L^{2}\left(\Omega_{1}\right)\times L^{2}\left(\Omega_{2}\right)\right) \rightarrow \left(H_{\Gamma}\right)' \times   \left(L^{2}\left(\Omega_{1}\right)\times L^{2}\left(\Omega_{2}\right)\right) 
\end{equation}
as follows
\begin{equation}\label{Le*}
\Lambda \left(z^{0},z^{1}\right)= \left(\theta'(0),-\theta(0)\right).
\end{equation}
Hence the null controllability problem reduces to prove that $\Lambda $ is onto, since then one can solve 
$$\Lambda \left(z^{0},z^{1}\right)=(U^1, - U^0)$$
to obtain suitable initial values $\left(z^{0},z^{1}\right)$ leading  to \eqref{citeta}. In fact, we prove that $\Lambda$ is an isomorphism and then the solution of the above equation is unique. In this direction, we compute
\begin{equation}\label{Le**}
\begin{array}{ll}
\left\langle \Lambda \left(z^{0},z^{1}\right), \left(z^{0},z^{1}\right) \right\rangle &=\left\langle \left(\theta'(0),-\theta(0)\right), \left(z^{0},z^{1}\right) \right\rangle\\\\
&=\displaystyle \left\langle \theta'_{1}(0), z^{0}_{1}  \right\rangle_{V',V}-
  \int_{\ouno} z^{1}_{1}\theta_{1}(0)  dx\\
  \\ &\qquad +\displaystyle \left\langle \theta'_{2}(0), z^{0}_{2}  \right\rangle_{(H^1(\Omega_2))',H^1(\Omega_2)} -
  \int_{\odue} z^{2}_{2}\theta_{2}(0)  dx,
\end{array}
\end{equation}
for every $\left(z^{0},z^{1}\right)\in   H_\Gamma \times  \left( L^{2}\left(\Omega_{1}\right)\times L^{2}\left(\Omega_{2}\right) \right)$.\\

%
%
%
%
%
\noindent By definition of transposition solution, it is easy to see that the right hand side of the above equation satisfies 
\begin{equation}\label{soltras'}
\begin{array}{l}
\displaystyle \left\langle \theta'_{1}(0), z^{0}_{1}  \right\rangle_{V',V} -
  \int_{\ouno} z^{1}_{1}\theta_{1}(0)  dx+\displaystyle \left\langle \theta'_{2}(0), z^{0}_{2}  \right\rangle_{(H^1(\Omega_2))',H^1(\Omega_2)} -
  \int_{\odue} z^{2}_{2}\theta_{2}(0)  dx\\
  \\
 \qquad\qquad \displaystyle= \int_0^T \int_{\omega_1}\left(\left|z_1'\right|^2+\left|z_1\right|^2\right)\,dx\,dt+\int_0^T \int_{\omega_2}\left(\left|z_2'\right|^2+\left|z_2\right|^2\right)\,dx\,dt.\end{array}
\end{equation} 

\noindent Thus, we have
\begin{equation}\label{ef*eps}
\left\langle \Lambda \left(z^{0},z^{1}\right), \left(z^{0},z^{1}\right) \right\rangle = \int_{\omega_1\times(0,T)}\left(\left|z_1'\right|^2+\left|z_1\right|^2\right)\,dx\,dt+\int_0^T \int_{\omega_2}\left(\left|z_2'\right|^2+\left|z_2\right|^2\right)\,dx\,dt.
\end{equation}
In view of the equivalence of the norms stated in Corollary \ref{equivalence}, the above identity shows that $\Lambda$ is an isomorphism between
$ H_\Gamma \times  \left( L^{2}\left(\Omega_{1}\right)\times L^{2}\left(\Omega_{2}\right) \right)$ and $ \left(H_{\Gamma}\right)' \times \left( L^{2}\left(\Omega_{1}\right)\times L^{2}\left(\Omega_{2}\right)\right)$, for $T$ large enough so that \eqref{T2} is satisfied. Hence Theorem \ref{mainteo} holds true with exact control
 $$\zeta \chi_{\omega} =(\zeta_1 \chi_{\omega_1}, \zeta_2 \chi_{\omega_2})=((-z''_1 + z_1) \chi_{\omega_1},(-z''_2 + z_2) \chi_{\omega_2}),$$ 
which is an element of $W'$. 

We point out that, unlike classical cases, the lower bound for the control time $T$ depends not only on the geometry of our domain and on the matrix of coefficients of our problem but also on the coefficient of proportionality of the jump of the solution of problem \eqref{eq2.6} with respect to the conormal derivatives via the constant $h_0$.

\section*{Acknowledgments}
This paper was completed during the visit of the second author at the University of Sannio, Department of Science and Techology, whose warm hospitality and support are gratefully acknowledged. The work was supported by the grant FFABR of MIUR. S.M. and C.P. are members of GNAMPA of INDAM.

\section*{Conflicts of interest}
None.

\section*{Authors’ contributions}
The authors conceived and wrote this article in collaboration and with the same responsibility. All of them read and
approved the final manuscript.


\end{document}